\documentclass[mathpazo]{paper}

\usepackage[T1]{fontenc}
\usepackage[utf8]{inputenc}
\usepackage{amsmath,amssymb,mathtools}
\usepackage{graphicx}
\usepackage{booktabs,multirow}
\usepackage{microtype}
\usepackage{enumitem}
\usepackage{float}
\allowdisplaybreaks

\newcommand{\R}{\mathbb{R}}

\newcommand{\Nzero}{\mathbb{N}_0}
\newcommand{\Npos}{\mathbb{N}_+}
\newcommand{\Z}{\mathbb{Z}}
\newcommand{\ip}[2]{\left\langle #1,#2\right\rangle}
\newcommand{\sn}[1]{\left|#1\right|_L}
\newcommand{\calR}{\mathcal{R}}
\newcommand{\calP}{\mathcal{P}}
\newcommand{\calQ}{\mathcal{Q}}
\newcommand{\Up}{\Upsilon}
\newcommand{\dd}{\mathrm{d}}
\newcommand{\diag}{\operatorname{diag}}
\newcommand{\cC}{\mathcal{C}}
\newcommand{\hR}{\widehat{\mathbb R}}

\makeatletter
\let\c@lemma\c@theorem

\let\p@lemma\p@theorem
\let\c@proposition\c@theorem

\let\p@proposition\p@theorem
\let\c@corollary\c@theorem

\let\p@corollary\p@theorem
\let\c@example\c@theorem

\let\p@example\p@theorem
\let\c@remark\c@theorem

\let\p@remark\p@theorem
\makeatother

\title[Energy Laws for First-Subdiagonal Pad\'e Methods]{On Energy Laws and Stability of\\First-Subdiagonal Pad\'e Approximants for\\Linear Seminegative Problems}

\author[]{Miaosen Jiao\affil{1},
Zheng Sun\affil{2}~and Kailiang Wu\affil{3}\comma\corrauth}
\address{\affilnum{1}\ Department of Mathematics, Southern University of Science and Technology, Shenzhen 518055, China \\
\affilnum{2}\ Department of Mathematics, The University of Alabama, Tuscaloosa, AL 35487, USA \\
\affilnum{3}\ Department of Mathematics and Shenzhen International Center for Mathematics, Southern University of Science and Technology, Shenzhen 518055, China}
\emails{{\tt 12531031@mail.sustech.edu.cn} (M.~Jiao), {\tt zsun30@ua.edu} (Z.~Sun), {\tt wukl@sustech.edu.cn} (K.~Wu)}

\begin{document}

\begin{abstract}
We derive an explicit discrete energy identity for rational time discretizations generated by the first-subdiagonal Pad\'e approximants of the exponential for solving linear seminegative problems.  This work extends the diagonal Pad\'e energy laws in [Z. Sun, Y. Wei, and K. Wu, SIAM J. Numer. Anal., 60 (2022)] to the first-subdiagonal family.  The main new ingredient is an explicit Cholesky-type factorization of the energy coefficient matrix associated with the semi-inner-product terms in the discrete energy identity.  The construction and proof of this factorization are nontrivial, since the matrix entries are alternating sums of Pad\'e coefficients and the triangular factor has a parity-dependent factorial structure.  We prove the factorization by reducing it to scalar rational identities and establishing them through finite product reductions and telescoping summations.  Together with a \(\beta\)-coefficient cancellation, the factorization yields an exact discrete energy law that recovers the classical unconditional contractivity for linear seminegative problems.  Numerical experiments adapted from the diagonal Pad\'e energy-law setting illustrate the predicted order and verify the discrete dissipation identity.
\end{abstract}

\ams{65M12, 65L06, 65L20, 15A23}
\keywords{Runge--Kutta methods, energy laws, first-subdiagonal Pad\'e approximants, linear seminegative problems, Cholesky-type factorization, telescoping identities, unconditional contractivity.}

\maketitle

\section{Introduction}
\label{sec:introduction}

Spatial semidiscretization of dissipative or conservative linear time-dependent partial differential equations (PDEs) often yields autonomous linear systems whose discrete operators are seminegative with respect to a specific inner product. Such systems satisfy a continuous energy dissipation law: the squared norm induced by the chosen inner product is nonincreasing in time. For high-order time discretizations, it is therefore natural to ask not only whether the method is accurate, but also whether the discrete scheme preserves---or at least precisely quantifies---the continuous dissipation mechanism, which can be essential to the robustness of the methods. In this paper, we focus particularly on the first-subdiagonal Pad\'e approximants.

Classical stability theory for Runge--Kutta methods often begins with the Dahlquist scalar test equation and its associated stability function. This approach leads to stability-region criteria such as A-stability and remains a central tool in the analysis of stiff time integrators~\cite{HairerWanner1996SolvingODEII}. For semidiscrete PDE systems, however, stability is also tied to the energy induced by the underlying inner product. Energy methods therefore provide a complementary viewpoint: they work directly with the discrete change of energy and identify the dissipative terms generated by the time discretization. Early energy-based stability analyses include the theory of algebraically stable Runge--Kutta methods~\cite{HairerWanner1996SolvingODEII} and studies of explicit Runge--Kutta methods for coercive problems~\cite{spijker1983contractivity,LevyTadmor1998EnergyMethod,gottlieb2001strong} (a subclass of seminegative problems). Energy arguments have also been widely used in the analysis of fully discrete schemes for linear time-dependent PDEs in various settings. 

A substantial literature on explicit Runge--Kutta methods for general linear seminegative problems has developed in recent decades. In this setting, classical scalar stability analysis may provide necessary but not sufficient conditions for strong stability in the $L^2$ norm~\cite{iserles2009first,SunShu2017RK4}; hence, the energy method provides a particularly useful alternative approach. In~\cite{Tadmor2002EnergyMethodII}, Tadmor proved the strong stability of the third-order explicit Runge--Kutta method; in~\cite{SunShu2017RK4}, Sun and Shu disproved the strong stability of the classic fourth-order Runge--Kutta method, but proved the method is strongly stable over every pair of consecutive time steps; in~\cite{RanochaOffner2018L2Stability}, Ranocha and \"Offner proved the strong stability of a ten-stage fourth-order Runge--Kutta method. Based on these works, Sun and Shu~\cite{SunShu2019StrongStability} provided a unified framework for analyzing the strong stability of explicit Runge--Kutta methods of arbitrarily high order. Moreover, the work in~\cite{SunShu2019StrongStability} provided a systematic algebraic framework for deriving discrete energy laws for Runge--Kutta methods. See also~\cite{XuZhangShuWang2019RKDGStability}. Subsequent work extended this line of analysis, including studies on nonlinear problems~\cite{Ranocha2021StrongStabilityNonlinear} and connections with hypocoercivity and semidissipative matrix structures~\cite{achleitner2022necessary,AchleitnerArnoldJuengel2023Hypocoercivity}. In a different but related direction, relaxation Runge--Kutta methods and their extensions provide mechanisms for enforcing conservation, energy stability, or entropy stability in nonlinear and fully discrete settings~\cite{Ketcheson2019RelaxationRK,RanochaKetcheson2020EnergyStability,Ranocha2021StrongStabilityNonlinear,RanochaSayyariDalcinParsaniKetcheson2020RelaxationEntropy}. Together, these works show that energy-based analysis is useful not only for proving stability, but also for identifying structural mechanisms behind stability properties of Runge--Kutta discretizations.

The closest point of departure for the present paper is the work of Sun, Wei, and Wu~\cite{SunWeiWu2022EnergyLaws}, which extended the Runge--Kutta energy framework in~\cite{SunShu2019StrongStability} to general implicit and explicit methods. In particular, they established a unified discrete energy law for all diagonal Pad\'e approximants. A key feature of their analysis is that the exact dissipative terms are obtained from an explicit Cholesky-type decomposition of a highly structured symmetric coefficient matrix. Thus, the problem is not only to prove contractivity, but to uncover the algebraic structure responsible for the discrete energy dissipation.

Pad\'e approximants of the exponential are natural objects in this context. They arise as rational stability functions of important implicit Runge--Kutta methods~\cite{Ehle1973AStablePade,Butcher1977AStableImplicitRK,WannerHairerNorsett1978OrderStars} and are also fundamental in algorithms for the matrix exponential, including scaling-and-squaring methods and subdiagonal Pad\'e variants~\cite{Higham2005ScalingSquaring,GuttelNakatsukasa2016SubdiagonalPade}. More recently, subdiagonal Pad\'e approximants have been studied in the rational approximation of operator semigroups~\cite{EgertRozendaal2013SubdiagonalSemigroups,NeubranderOzerWindsperger2020SubdiagonalPade}. For the first-subdiagonal family, classical A-stability and zero-location results are known~\cite{Ehle1973AStablePade,EhlePicel1975TwoParameter,SaffVarga1975}.

The $s$-stage Radau IIA collocation method is a classical algebraically stable implicit Runge--Kutta method whose stability function is exactly the $(s-1,s)$ Pad\'e approximant to the exponential~\cite{HairerWanner1996SolvingODEII}. Its algebraic stability implies $B$-stability and hence stepwise contractivity between numerical solutions of dissipative ODEs; the standard argument is formulated in terms of the internal Runge--Kutta stages~\cite{HairerWanner1981AlgebraicallyStable,HairerWanner1996SolvingODEII}. In related RK--SAV work, Radau IIA formulas were used in extrapolated schemes for the Allen--Cahn and Cahn--Hilliard equations, and a discrete modified-energy decay law was established for the resulting schemes~\cite{AkrivisLiLi2019EnergyDecaying}. Taken together, these studies establish contractivity or modified-energy decay through stage-based or auxiliary-variable formulations. They do not, however, yield a unified closed-form factorization of the Pad\'e energy coefficient matrix valid for the entire first-subdiagonal family.

The present paper addresses this gap by extending the diagonal Pad\'e energy-law analysis to the first-subdiagonal Pad\'e family. This extension is not a direct consequence of the diagonal case. The energy coefficient matrix for the first-subdiagonal family is built from alternating sums of Pad\'e coefficients, while the triangular factor identified below has parity-dependent factorial entries. Establishing the factorization therefore requires new finite product identities, adjacent-index summations, and a rational-continuation argument. To the best of our knowledge, an explicit Cholesky-type energy-law structure for the first-subdiagonal family has not previously been available.

The main contributions are as follows.
\begin{itemize}[leftmargin=2em]
	\item We identify and prove an explicit Cholesky-type factorization of the energy coefficient matrix for the first-subdiagonal Pad\'e family. This factorization is the central algebraic structure of the paper. Its proof is nontrivial because the matrix entries are alternating sums of Pad\'e coefficients, whereas the triangular factor has parity-dependent factorial entries.
	\item We combine this factorization with a \(\beta\)-coefficient cancellation and the Pad\'e denominator root-location property to obtain an exact discrete energy law. This identity recovers the classical unconditional contractivity and, more importantly, explicitly identifies the individual dissipation channels generated by the time discretization.
	\item We develop a rational-continuation and finite-telescoping proof of the factorization. The argument reduces the matrix identity to scalar rational identities and proves them through product reductions and adjacent-index summations.
\end{itemize}

Numerical experiments, adapted from the diagonal Pad\'e energy-law setting, illustrate the expected convergence order and verify the stepwise discrete dissipation identity.

The paper is organized as follows. Section~\ref{sec:general-framework} states the general energy framework and denominator-invertibility criterion. Section~\ref{sec:first-subdiagonal} introduces the first-subdiagonal Pad\'e coefficients, proves denominator well-definedness, states the Cholesky-type factorization and the \(\beta\)-coefficient cancellation, and derives the resulting energy law. Section~\ref{sec:first-subdiagonal-proof} proves the algebraic factorization and the auxiliary identities used in Section~\ref{sec:first-subdiagonal}. Section~\ref{sec:numerical-experiments} presents three numerical tests, adapted from the examples in~\cite{SunWeiWu2022EnergyLaws}, for the first-subdiagonal Pad\'e family. Section~\ref{sec:conclusions} summarizes the results and points to possible extensions.

\section{General energy framework}
\label{sec:general-framework}
We first recall the components of the Runge--Kutta energy framework for linear seminegative problems that are needed below.  The identities and criteria in this section are adapted from~\cite{SunWeiWu2022EnergyLaws} and are stated in the notation needed for the first-subdiagonal Pad\'e analysis below.
We use $\Nzero=\{0,1,2,\ldots\}$ and $\Npos=\{1,2,\ldots\}$. All vector spaces are real unless otherwise specified.

\subsection{Linear seminegative systems}
Let $V$ be a real Hilbert space equipped with the inner product
$\ip{\cdot}{\cdot}$ and the induced norm $\|\cdot\|$. Let $\mathcal L(V)$ denote the space of bounded linear operators on $V$, with $\|A\|_{\mathcal L(V)}:=\sup_{0\ne v\in V}\|Av\|/\|v\|$. We consider the linear autonomous system
\begin{equation}
    \frac{\dd}{\dd t}u(t)=Lu(t),\qquad u(t)\in V,
    \label{eq:linear-system}
\end{equation}
where $L:V\to V$ is a bounded linear seminegative operator, namely
\begin{equation}
    \ip{Lv}{v}\le 0,\qquad \forall v\in V.
\end{equation}
The operator $L$ is not assumed to be normal. Following~\cite{SunWeiWu2022EnergyLaws}, we define the $L$-associated semi-inner product by
\begin{equation}
    [w,v]_L := -\ip{Lw}{v}-\ip{w}{Lv},
    \label{eq:semi-inner-product}
\end{equation}
and denote the corresponding seminorm by
\begin{equation}
    \sn{v}:=[v,v]_L^{1/2}.
\end{equation}
Then $[\,\cdot,\cdot\,]_L$ is a symmetric positive semidefinite bilinear form and $\sn{v}^2\ge 0$ for all $v\in V$. The exact solution of \eqref{eq:linear-system} satisfies the energy dissipation law
\begin{equation}
    \frac{\dd}{\dd t}\|u(t)\|^2
    =\ip{Lu(t)}{u(t)}+\ip{u(t)}{Lu(t)}
    =-\sn{u(t)}^2\le 0.
    \label{eq:continuous-energy}
\end{equation}

\subsection{A general energy identity for RK methods with rational stability functions}
Consider a one-step discretization whose action on the linear autonomous problem is described by a rational stability function
\begin{equation}
    \calR(z)=\frac{\calP(z)}{\calQ(z)},
    \qquad
    \calP(z)=\sum_{i=0}^{s}\theta_i z^i,
    \qquad
    \calQ(z)=\sum_{i=0}^{s}\vartheta_i z^i.
    \label{eq:rational-stability}
\end{equation}
Here $s$ is chosen large enough to cover both polynomial degrees, and coefficients beyond the actual degree are understood to be zero.
For RK methods, this is the standard stability-function representation. The fully discrete method applied to \eqref{eq:linear-system} can be written as
\begin{equation}
    u^{n+1}=\calR(\tau L)u^n
    =\calQ(\tau L)^{-1}\calP(\tau L)u^n,
    \label{eq:rk-discrete}
\end{equation}
where $\tau>0$ is the time step. Assuming $\calQ(\tau L)$ is invertible, set
\begin{equation}
    P:=\calP(\tau L),\qquad Q:=\calQ(\tau L),\qquad w^n:=Q^{-1}u^n.
\end{equation}
Since $P$ and $Q$ are polynomials in $L$, $P$ commutes with $Q$ and hence with $Q^{-1}$. Thus $u^{n+1}=Pw^n$ and $u^n=Qw^n$. Define
\begin{equation}
    \alpha_{ij}:=\theta_i\theta_j-\vartheta_i\vartheta_j,
    \qquad 0\le i,j\le s .
    \label{eq:alpha-def}
\end{equation}

Equivalently, the coefficients \(\alpha_{ij}\) are characterized by the two-variable identity
\begin{equation}
    \calP(x)\calP(y)-\calQ(x)\calQ(y)
    =
    \sum_{i=0}^{s}\sum_{j=0}^{s}\alpha_{ij}x^i y^j .
    \label{eq:alpha-bilinear-generating}
\end{equation}
In particular, setting \(y=-x\) gives
\begin{equation}
    \calP(x)\calP(-x)-\calQ(x)\calQ(-x)
    =
    \sum_{i=0}^{s}\sum_{j=0}^{s}(-1)^j\alpha_{ij}x^{i+j}.
    \label{eq:pq-minus-specialization}
\end{equation}
This specialization will be used below to identify the norm coefficients in the energy identity.

The next lemma is the algebraic Runge--Kutta energy identity from~\cite{SunWeiWu2022EnergyLaws}, written here for the rational stability function $\calR=\calP/\calQ$. It is included to fix notation; the Pad\'e-specific coefficient-matrix decomposition is constructed in Section~\ref{sec:first-subdiagonal}.
\begin{lemma}[General RK energy identity \cite{SunWeiWu2022EnergyLaws}]\label{lem:quoted-general-energy}
The numerical solution of \eqref{eq:rk-discrete} satisfies
\begin{equation}
\begin{aligned}
    \|u^{n+1}\|^2-\|u^n\|^2
    &=\sum_{k=0}^{s}\beta_k\tau^{2k}\|L^k w^n\|^2 \\
    &\quad +\sum_{i=0}^{s-1}\sum_{j=0}^{s-1}
    \gamma_{ij}\tau^{i+j+1}[L^i w^n,L^j w^n]_L,
\end{aligned}
\label{eq:discrete-energy-general}
\end{equation}
where
\begin{equation}
    \beta_k
    =
    \sum_{\ell=\max\{0,2k-s\}}^{\min\{2k,s\}}
    (-1)^{k-\ell}\alpha_{\ell,2k-\ell},
    \label{eq:beta-def}
\end{equation}
and
\begin{equation}
    \gamma_{ij}
    =
    \sum_{\ell=\max\{0,i+j+1-s\}}^{\min\{i,j\}}
    (-1)^{\min\{i,j\}+1-\ell}
    \alpha_{\ell,i+j+1-\ell}.
    \label{eq:gamma-def}
\end{equation}
\end{lemma}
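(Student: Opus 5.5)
Starting from $u^{n+1}=Pw^n$ and $u^n=Qw^n$, I will expand both norms bilinearly. Writing $v_i:=\tau^iL^iw^n$, we have
\begin{equation*}
\|u^{n+1}\|^2-\|u^n\|^2=\sum_{i,j=0}^{s}(\theta_i\theta_j-\vartheta_i\vartheta_j)\ip{v_i}{v_j}=\sum_{i,j=0}^{s}\alpha_{ij}\ip{v_i}{v_j}.
\end{equation*}
Since $\alpha_{ij}$ is symmetric, the right-hand side only depends on $\alpha_{ij}$ through the symmetric pairings. The task is then to rewrite every mixed inner product $\ip{v_i}{v_j}$ with $i\ne j$ as a combination of norms $\|v_k\|^2$ and semi-inner products $[v_a,v_b]_L$.

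The basic tool is a shift identity read off from \eqref{eq:semi-inner-product}. With $v_{i+1}=\tau Lv_i$,
\begin{equation*}
\ip{v_{i+1}}{v_j}+\ip{v_i}{v_{j+1}}=-\tau[v_i,v_j]_L\cdot\tau^{0}=-\tau^{i+j+1}[L^iw^n,L^jw^n]_L .
\end{equation*}
Set $m_{ij}:=\ip{v_i}{v_j}$ and fix the total index $N=i+j$. The recursion $m_{i+1,j}=-m_{i,j+1}-\tau^{N+1}[L^iw^n,L^jw^n]_L$ moves indices toward the diagonal. For even $N=2k$, every $m_{\ell,2k-\ell}$ equals $(-1)^{k-\ell}\|v_k\|^2$ plus an alternating sum of seminorm terms. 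For odd $N=2k+1$, the chain ends at $m_{k,k+1}+m_{k+1,k}=-\tau^{2k+1}[L^kw^n,L^kw^n]_L$.

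I will then collect the coefficients along each anti-diagonal $\ell+(N-\ell)=N$, keeping in mind that $\ell$ ranges over $\max\{0,N-s\}\le\ell\le\min\{N,s\}$ because of the degree truncation.

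For the norm part, the anti-diagonal $N=2k$ contributes $\sum_\ell(-1)^{k-\ell}\alpha_{\ell,2k-\ell}\|v_k\|^2$, which is exactly $\beta_k\tau^{2k}\|L^kw^n\|^2$ as in \eqref{eq:beta-def}. Odd anti-diagonals contribute no norm terms. As a cross-check, this matches the specialization \eqref{eq:pq-minus-specialization}.

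For the semi-inner-product part, I will reduce $m_{\ell,N-\ell}$ by induction on the distance $|N-2\ell|$. Take the case $\ell\le N-\ell$. Applying the recursion, $m_{\ell,N-\ell}$ picks up the terms $(-1)^{p-\ell+1}\tau^{N+1-\cdots}$, which produce $[v_p,v_{N-1-p}]_L$ for $\ell\le p<\lfloor N/2\rfloor$.

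After interchanging the summations, the coefficient of $\tau^{i+j+1}[L^iw^n,L^jw^n]_L$ with $i+j+1=N$ becomes an alternating sum of the $\alpha_{\ell,N-\ell}$ over $\ell\le\min\{i,j\}$, carrying the sign $(-1)^{\min\{i,j\}+1-\ell}$. This yields \eqref{eq:gamma-def}. The symmetric assignment to both $(i,j)$ and $(j,i)$ comes from using $\alpha_{\ell,N-\ell}=\alpha_{N-\ell,\ell}$, so that each of the two symmetric pairs $m_{\ell,N-\ell}$ and $m_{N-\ell,\ell}$ is reduced toward its own side.

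The main obstacle is the index bookkeeping in this last step. I have to confirm that the symmetric splitting gives exactly $\gamma_{ij}$ with the stated sign, including the middle term $i=j$ on odd anti-diagonals. I also have to check that the truncation bounds $\max\{0,i+j+1-s\}$ are respected, and that $i,j\le s-1$ covers every term that occurs. I will verify this by checking small $N$ (for example $N=1,2,3$) explicitly before writing the general induction.
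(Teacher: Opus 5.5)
Your proposal is correct and follows the same route as the paper, which only sketches the argument: expand $\|Pw^n\|^2-\|Qw^n\|^2$ through the $\alpha_{ij}$, apply the integration-by-parts identity \eqref{eq:discrete-ibp} repeatedly along each anti-diagonal, and compare coefficients. The bookkeeping you left open does close: reducing $m_{\ell,N-\ell}$ ($\ell\le N/2$) toward the diagonal puts the coefficient $(-1)^{p+1-\ell}$ on $\tau^{N}[L^pw^n,L^{N-1-p}w^n]_L$; the odd middle term uses $m_{k,k+1}=-\tfrac12\tau^{2k+1}\sn{L^kw^n}^2$; and halving the doubled off-diagonal contributions gives exactly \eqref{eq:gamma-def}, with the truncation $\ell\ge\max\{0,i+j+1-s\}$ forcing $i,j\le s-1$.
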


For completeness, we recall the idea behind Lemma~\ref{lem:quoted-general-energy}. One first expands
\begin{equation}
\begin{aligned}
    \|u^{n+1}\|^2-\|u^n\|^2
    &=\|Pw^n\|^2-\|Qw^n\|^2  \\
    &=\sum_{i=0}^{s}\sum_{j=0}^{s}
    \alpha_{ij}\tau^{i+j}\ip{L^i w^n}{L^j w^n}.
\end{aligned}
\label{eq:raw-energy}
\end{equation}
The signs of the inner products in \eqref{eq:raw-energy} are generally indefinite. Repeated use of the discrete integration-by-parts identity
\begin{equation}
    \ip{w}{Lv}=-\ip{Lw}{v}-[w,v]_L
    \label{eq:discrete-ibp}
\end{equation}
transforms \eqref{eq:raw-energy} into \eqref{eq:discrete-energy-general}; comparison of coefficients gives \eqref{eq:beta-def}--\eqref{eq:gamma-def}.

Let
\begin{equation}
    B:=\diag(\beta_0,\ldots,\beta_s),
    \qquad
    \Up:=(\gamma_{ij})_{i,j=0}^{s-1}.
\end{equation}
The following decomposition principle is the Cholesky-type algebraic step used in the same energy framework~\cite{SunWeiWu2022EnergyLaws}. It converts a representation of the form $\Up=-U^TDU$ into an explicit energy-dissipation identity.
\begin{lemma}[Decomposition principle]\label{lem:quoted-factorization}
Assume that
\begin{equation}
    \Up=-U^TDU,
    \label{eq:general-cholesky}
\end{equation}
where $D=\diag(d_0,\ldots,d_{s-1})$ with $d_k\ge0$ and $U=(\mu_{ij})_{i,j=0}^{s-1}$ is upper triangular. Then
\begin{equation}
\sum_{i=0}^{s-1}\sum_{j=0}^{s-1}
\gamma_{ij}\tau^{i+j+1}[L^i w^n,L^j w^n]_L
=
-\sum_{k=0}^{s-1}d_k\tau^{2k+1}\sn{L^k u^{(k)}}^2,
\label{eq:factorized-energy-term}
\end{equation}
where
\begin{equation}
    u^{(k)}:=\sum_{j=k}^{s-1}\mu_{kj}(\tau L)^{j-k}w^n.
\end{equation}
\end{lemma}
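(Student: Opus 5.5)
The plan is to substitute the factorization \eqref{eq:general-cholesky} into the quadratic form and regroup it as a sum of squares in the semi-inner product. By bilinearity and symmetry of $[\cdot,\cdot]_L$, I would first write the left-hand side of \eqref{eq:factorized-energy-term} as
\begin{equation*}
\sum_{i=0}^{s-1}\sum_{j=0}^{s-1}\gamma_{ij}\,[x_i,x_j]_L,
\qquad x_i:=\tau^{i+1/2}L^i w^n ,
\end{equation*}
where the powers of $\tau$ are split symmetrically, $\tau^{i+j+1}=\tau^{i+1/2}\tau^{j+1/2}$. Inserting $\gamma_{ij}=-\sum_{k}\mu_{ki}d_k\mu_{kj}$, which is just the $(i,j)$ entry of $-U^TDU$, and interchanging the finite sums gives
\begin{equation*}
-\sum_{k=0}^{s-1}d_k\Bigl[\sum_i\mu_{ki}x_i,\ \sum_j\mu_{kj}x_j\Bigr]_L
=-\sum_{k=0}^{s-1}d_k\,\Bigl|\sum_{j}\mu_{kj}x_j\Bigr|_L^2 .
\end{equation*}

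The second step identifies the vector inside the seminorm with the quantity in the statement. Since $U$ is upper triangular, $\mu_{kj}=0$ for $j<k$, so the inner sum runs over $j=k,\dots,s-1$. Because powers of $L$ commute, I would then factor out $\tau^{k+1/2}L^k$:
\begin{equation*}
\sum_{j=k}^{s-1}\mu_{kj}\tau^{j+1/2}L^jw^n=\tau^{k+1/2}L^k\sum_{j=k}^{s-1}\mu_{kj}(\tau L)^{j-k}w^n=\tau^{k+1/2}L^ku^{(k)}.
\end{equation*}
Homogeneity of the seminorm, $\sn{c v}^2=c^2\sn{v}^2$ for real $c$, then gives $\tau^{2k+1}\sn{L^ku^{(k)}}^2$, which is \eqref{eq:factorized-energy-term}.

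No step is a real obstacle; the only care needed is bookkeeping. The split of $\tau^{i+j+1}$ must be symmetric so that the form $[\cdot,\cdot]_L$ is applied to the same vector in both slots. The triangular structure must be used to start the sum at $j=k$, so that the factor $L^k$ can be pulled out. The hypothesis $d_k\ge0$ is not needed for the identity itself; it only guarantees that the right-hand side is nonpositive, which is the dissipation interpretation.
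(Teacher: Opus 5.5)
Your proposal is correct and follows the paper's own proof. The paper substitutes $\Up=-U^TDU$ into the quadratic form and groups the terms row by row in $U$. You do the same and also write out the bookkeeping the paper leaves implicit: the symmetric split $\tau^{i+j+1}=\tau^{i+1/2}\tau^{j+1/2}$, bilinearity and homogeneity of $[\cdot,\cdot]_L$, and upper triangularity to factor out $\tau^{k+1/2}L^k$.
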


\begin{proof}
Substitute \eqref{eq:general-cholesky} into the quadratic form in \eqref{eq:discrete-energy-general} and group the terms row by row in $U$. This gives exactly \eqref{eq:factorized-energy-term}.
\end{proof}

\subsection{A denominator-invertibility criterion}
The energy identity above is meaningful only when the rational step is well-defined.  The following elementary criterion will be used together with the first-subdiagonal Pad\'e root-location result in Section~\ref{sec:first-subdiagonal}.

\begin{lemma}[Spectral criterion for denominator invertibility~\cite{SunWeiWu2022EnergyLaws}]
\label{lem:denominator-invertibility-criterion}
Let \(q\) be a scalar polynomial with \(q(0)\ne0\), and let \(Z(q)\) be the set of its complex zeros. Here \(\sigma(L)\) denotes the spectrum of the complexification \(L_{\mathbb C}\) of \(L\). If
\begin{equation}
    Z(q)\cap \tau\sigma(L)=\emptyset,
\end{equation}
then \(q(\tau L)\) is invertible.  In particular, if every zero of \(q\) lies in the open right half-plane and \(L\) is seminegative, then \(q(\tau L)\) is invertible for every \(\tau>0\).
\end{lemma}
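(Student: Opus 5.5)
The plan is to reduce invertibility of \(q(\tau L)\) to a spectral statement about \(L_{\mathbb C}\), using a factorization of \(q\) into linear factors. Since \(q(0)\ne0\), we can write
\[
q(z)=c\prod_{m=1}^{d}(z-z_m),\qquad c\ne0,\quad z_m\ne0,
\]
where \(d\) is the degree of \(q\) and the \(z_m\) are its zeros counted with multiplicity. If \(d=0\), then \(q(\tau L)=c\,I\) and there is nothing to prove. Because \(q\) has real coefficients, \(q(\tau L)\) is a real operator on \(V\) whose complexification is \(q(\tau L_{\mathbb C})\). A real bounded operator is invertible exactly when its complexification is, since the complexified inverse commutes with conjugation and therefore restricts to \(V\). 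It therefore suffices to work on \(V_{\mathbb C}\), where
\[
q(\tau L_{\mathbb C})=c\prod_{m=1}^{d}(\tau L_{\mathbb C}-z_m I).
\]
The factors commute, and a finite product of commuting bounded invertible operators is invertible. So the task reduces to showing that each factor \(\tau L_{\mathbb C}-z_mI\) is invertible.

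By hypothesis \(z_m\notin\tau\sigma(L)\), that is, \(z_m/\tau\notin\sigma(L_{\mathbb C})\). Hence \(L_{\mathbb C}-(z_m/\tau)I\) is invertible, and multiplying by \(\tau>0\) shows that \(\tau L_{\mathbb C}-z_mI\) is invertible as well. This proves the first claim. The spectral mapping theorem for polynomials would give the same conclusion, but the factor-by-factor argument avoids invoking it.

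For the ``in particular'' statement, we show that seminegativity places \(\sigma(L_{\mathbb C})\) in the closed left half-plane. Extend the inner product sesquilinearly to \(V_{\mathbb C}\). For \(v=x+iy\) we get
\[
\operatorname{Re}\langle L_{\mathbb C}v,v\rangle=\langle Lx,x\rangle+\langle Ly,y\rangle\le0,
\]
because the cross terms cancel in the real part. Now take \(\lambda\) with \(\operatorname{Re}\lambda>0\). The estimate
\[
\operatorname{Re}\langle(\lambda-L_{\mathbb C})v,v\rangle\ge\operatorname{Re}\lambda\,\|v\|^2
\]
implies \(\|(\lambda-L_{\mathbb C})v\|\ge\operatorname{Re}\lambda\,\|v\|\). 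Thus \(\lambda-L_{\mathbb C}\) is injective with closed range. The same estimate holds for the adjoint, since \(\operatorname{Re}\langle L_{\mathbb C}^*v,v\rangle=\operatorname{Re}\langle L_{\mathbb C}v,v\rangle\). So the adjoint is injective, the range of \(\lambda-L_{\mathbb C}\) is dense, and \(\lambda-L_{\mathbb C}\) is invertible. This gives \(\sigma(L_{\mathbb C})\subset\{\operatorname{Re}\le0\}\). Multiplying by \(\tau>0\) preserves this half-plane, so \(\tau\sigma(L)\) does not meet the open right half-plane. If all zeros of \(q\) lie there, the first claim applies.

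The only point requiring care is the passage from the real operator to its complexification, together with the closed-range and density argument. All the other steps are routine.
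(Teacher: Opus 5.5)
Your proof is correct and follows the same route as the paper: factor \(q\) into linear factors, invert each factor \(\tau L-z_mI\) using the hypothesis on \(\tau\sigma(L)\), and use dissipativity of \(L_{\mathbb C}\) to place its spectrum in the closed left half-plane. The only differences are that you spell out the passage between \(V\) and its complexification, and you prove the spectrum-in-numerical-range inclusion directly from the lower bound and the adjoint, whereas the paper simply cites that inclusion as standard.
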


\begin{proof}
Factor \(q(z)=c\prod_m(z-\zeta_m)\), where \(\zeta_m\in Z(q)\).  If \(\zeta_m\notin\tau\sigma(L)\), then \(\tau L-\zeta_m I\) is invertible by the definition of the spectrum.  Hence \(q(\tau L)=c\prod_m(\tau L-\zeta_m I)\) is invertible.  If all \(\zeta_m\) have positive real parts, then they cannot belong to \(\tau\sigma(L)\). Indeed, the complexified operator is dissipative, so its numerical range is contained in the closed left half-plane; by the standard inclusion of the spectrum in the closure of the numerical range for bounded operators, \(\sigma(L)\) is contained in the closed left half-plane.
\end{proof}

\section{First-subdiagonal Pad\'e approximants and the main energy law}
\label{sec:first-subdiagonal}
We now specialize the general energy framework to the \((s-1,s)\) Pad\'e family.  After recording the coefficient formulas and the denominator invertibility needed for the rational step, we state the explicit Cholesky-type factorization and derive the corresponding energy law.  The proof of the factorization itself is deferred to Section~\ref{sec:first-subdiagonal-proof}.

\subsection{Pad\'e coefficients and well-definedness}
For \(s\ge1\), the \((s-1,s)\) Pad\'e approximant to \(e^z\) is denoted by
\begin{equation}
    \calR_s(z)=\frac{\calP_s(z)}{\calQ_s(z)}.
\end{equation}
In the notation of \eqref{eq:rational-stability}, its coefficients are
\begin{equation}
\begin{aligned}
    \theta_i
    &=
    \frac{(s-1)!}{(2s-1)!}
    \frac{(2s-i-1)!}{i!(s-i-1)!},
    &&0\le i\le s-1,\qquad
    \theta_s=0,  \\
    \vartheta_i
    &=
    (-1)^i
    \frac{s!}{(2s-1)!}
    \frac{(2s-i-1)!}{i!(s-i)!},
    &&0\le i\le s .
\end{aligned}
\label{eq:subdiag-coefficients}
\end{equation}
It is useful to introduce the positive coefficients
\begin{equation}
    \eta_i
    :=
    \frac{s!}{(2s-1)!}
    \frac{(2s-i-1)!}{i!(s-i)!},
    \qquad 0\le i\le s.
    \label{eq:eta-def}
\end{equation}
Then
\begin{equation}
    \vartheta_i=(-1)^i\eta_i,
    \qquad
    \theta_i=\frac{s-i}{s}\eta_i,
    \qquad 0\le i\le s.
    \label{eq:theta-vartheta-eta}
\end{equation}
The second identity also holds for \(i=s\), because \(\theta_s=0\). Consequently, in the matrix range of Lemma~\ref{lem:quoted-general-energy},
\begin{equation}
    \alpha_{ij}
    =
    \left(\frac{(s-i)(s-j)}{s^2}-(-1)^{i+j}\right)\eta_i\eta_j.
    \label{eq:alpha-subdiag}
\end{equation}

The next corollary gives the well-definedness of the rational step.  It is based on the zero-location theorem of Saff--Varga for Pad\'e approximants to the exponential.
\begin{corollary}[Denominator invertibility for the first-subdiagonal family]
\label{cor:subdiag-denominator-invertible}
Let \(L\) be a bounded seminegative operator on the real Hilbert space \(V\).  For every \(s\ge1\) and every \(\tau>0\), the first-subdiagonal denominator \(\calQ_s(\tau L)\) of the \((s-1,s)\) Pad\'e approximant is invertible.
\end{corollary}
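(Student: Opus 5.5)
The plan is to apply Lemma~\ref{lem:denominator-invertibility-criterion} with \(q=\calQ_s\). This requires two facts: that \(\calQ_s(0)\ne0\), and that every complex zero of \(\calQ_s\) lies in the open right half-plane. The first fact is immediate from \eqref{eq:subdiag-coefficients}: \(\calQ_s(0)=\vartheta_0=\frac{s!}{(2s-1)!}\frac{(2s-1)!}{0!\,s!}=1\). Once both facts are in hand, the second clause of Lemma~\ref{lem:denominator-invertibility-criterion} gives the invertibility of \(\calQ_s(\tau L)\) for every \(\tau>0\) and every bounded seminegative \(L\). No assumption of normality is needed, since the lemma only uses that \(\sigma(L)\) lies in the closed left half-plane.

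For the root location, I will rewrite \(\calQ_s(z)=\sum_i(-1)^i\eta_i z^i=\widetilde{\calQ}(-z)\), where \(\widetilde{\calQ}(x):=\sum_{i=0}^s\eta_i x^i\) has strictly positive coefficients. By \eqref{eq:eta-def}, \(\widetilde{\calQ}\) is, up to normalization, the numerator of the \((s,s-1)\) Pad\'e approximant to \(e^x\). Equivalently, it is the denominator of the \((s-1,s)\) approximant evaluated at \(-x\). It therefore suffices to show that all zeros of \(\widetilde{\calQ}\) lie in the open left half-plane \(\operatorname{Re}x<0\). This is exactly the content of the Saff--Varga zero-location theorem~\cite{SaffVarga1975}: the zeros of the Pad\'e numerators \(P_{n,\nu}\) of \(e^z\) lie in \(\operatorname{Re}z<0\), and in fact in a parabolic region to the left of the imaginary axis, whenever \(n\ge\nu-1\). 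I will invoke it with \((n,\nu)=(s,s-1)\), or equivalently state it for the denominator of the \((s-1,s)\) approximant, whose zeros lie in \(\operatorname{Re}z>0\).

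As a consistency check, and as a self-contained alternative if a citation-free argument is preferred, I can use the classical A-stability of the first-subdiagonal Pad\'e approximants~\cite{Ehle1973AStablePade}. A-stability means \(\calR_s\) is analytic in the closed left half-plane. For this to yield the root location, \(\calP_s\) and \(\calQ_s\) must have no common zeros. That holds because Pad\'e approximants to \(e^z\) are normal: a common factor would lower the degrees and contradict the order \(2s-1\) of approximation. Granting coprimality, the zeros of \(\calQ_s\) are genuine poles of \(\calR_s\), so they cannot lie in \(\operatorname{Re}z\le0\), and they are therefore in the open right half-plane.

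The only real obstacle is this root-location step. Everything else is bookkeeping with \eqref{eq:subdiag-coefficients}. I plan to rely on the cited Saff--Varga result rather than reprove it, with A-stability plus coprimality as the fallback justification. I will also check the case \(s=1\) directly: \(\calQ_1(z)=1-z\), whose single zero \(z=1\) lies in the right half-plane, as expected.
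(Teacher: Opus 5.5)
Your proposal is correct and is essentially the paper's proof: you check $\calQ_s(0)=1$, identify $\calQ_s(z)=P_{s,s-1}(-z)$, use the Saff--Varga zero location for $n-\nu=1$ to put every zero of $\calQ_s$ in the open right half-plane, and conclude with Lemma~\ref{lem:denominator-invertibility-criterion}. One correction: the hypothesis $n\ge\nu-1$ in your paraphrase of the cited theorem is false as stated, since the Taylor section $P_{5,0}(z)=\sum_{k=0}^{5}z^k/k!$ already has zeros with positive real part; quote it only for $n-\nu=1$, the sole case you use, and the argument is unaffected.
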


\begin{proof}
In the notation of Saff--Varga~\cite{SaffVarga1975}, let \(P_{m,n}\) denote the numerator polynomial of the \((m,n)\)-Pad\'e approximant to the exponential, and let \(Q_{m,n}\) denote the corresponding denominator. Their zero-location theorem implies that all zeros of \(P_{n,\nu}\) lie in the open left half-plane when \(n-\nu=1\). For the first-subdiagonal denominator considered here,
\begin{equation}
    \calQ_s(z)=Q_{s-1,s}(z)=P_{s,s-1}(-z).
\end{equation}
Hence all zeros of \(\calQ_s\) lie in the open right half-plane. The claim follows from Lemma~\ref{lem:denominator-invertibility-criterion}.
\end{proof}

\subsection{Main Cholesky-type factorization}
Define
\begin{equation}
    d_k:=\frac{(k!)^2}{(2k)!(2k+1)!},
    \qquad 0\le k\le s-1,
    \label{eq:dk-def}
\end{equation}
and set
\begin{equation}
    D:=\diag(d_0,d_1,\ldots,d_{s-1}).
\end{equation}
We define an upper triangular matrix \(U=(\mu_{ij})_{i,j=0}^{s-1}\) as follows. For \(0\le i\le j\le s-1\) and \(i\equiv j\pmod 2\), let
\begin{equation}
\mu_{ij}
=
\frac{s!}{(2s)!}
\frac{(2i+1)!}{i!(i+j+1)!}
\frac{(2s+i-j)!}{(s-1-j)!}
\times
\frac{
\left(s-1-\frac{i+j}{2}\right)!
\left(\frac{i+j}{2}\right)!
}{
\left(s-\frac{j-i}{2}\right)!
\left(\frac{j-i}{2}\right)!
} .
\label{eq:mu-even}
\end{equation}
For \(0\le i\le j\le s-1\) and \(i\equiv j+1\pmod 2\), let
\begin{equation}
\mu_{ij}
=
-\frac{2s!}{(2s)!}
\frac{(2i+1)!}{i!(i+j)!}
\frac{(2s+i-j-2)!}{(s-1-j)!}
\times
\frac{
\left(s-1-\frac{i+j+1}{2}\right)!
\left(\frac{i+j-1}{2}\right)!
}{
\left(s-1-\frac{j-i+1}{2}\right)!
\left(\frac{j-i-1}{2}\right)!
} .
\label{eq:mu-odd}
\end{equation}
Finally, set \(\mu_{ij}=0\) for \(i>j\). All factorials in \eqref{eq:mu-even} and \eqref{eq:mu-odd} are taken at nonnegative integers under the stated parity conditions. A direct substitution into \eqref{eq:mu-even} gives \(\mu_{ii}=1\) for every \(0\le i\le s-1\); hence \(U\) is nonsingular.

The following factorization is the main algebraic result of the paper.  Its formula is explicit, but its proof is not a routine Cholesky computation: the entries of \(\Up\) come from alternating Pad\'e coefficient sums, whereas the entries of \(U\) are parity-dependent factorial expressions.  Section~\ref{sec:first-subdiagonal-proof} proves the identity by reducing it to finite telescoping sums.
\begin{theorem}
\label{thm:main-cholesky}
For every integer \(s\ge1\), the coefficient matrix \(\Up=(\gamma_{ij})_{i,j=0}^{s-1}\) associated with the \((s-1,s)\) Pad\'e approximant and defined by \eqref{eq:gamma-def} satisfies
\begin{equation}
    \Up=-U^TDU.
    \label{eq:main-cholesky}
\end{equation}
In particular, \(\Up\) is negative definite.
\end{theorem}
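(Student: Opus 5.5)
Since \(\mu_{ii}=1\), \(U\) is unit upper triangular. Each \(d_k\) in \eqref{eq:dk-def} is strictly positive. So once \eqref{eq:main-cholesky} holds, \(-\Up=U^TDU\) is positive definite, which gives the second assertion at once. The work is therefore entirely in the matrix identity. Both sides are symmetric: \(\gamma_{ij}\) depends only on \(i+j\) and \(\min\{i,j\}\). So it suffices to prove, for \(0\le i\le j\le s-1\),
\begin{equation*}
S_{ij}:=\sum_{k=0}^{i}\mu_{ki}\,d_k\,\mu_{kj}=-\gamma_{ij}.
\end{equation*}

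Rather than attack \(\gamma_{ij}\) directly as an alternating sum, I would first extract a recurrence along antidiagonals from \eqref{eq:gamma-def}. Put \(m=i+j+1\). For \(1\le i\le j\) with \(j+1\le s-1\), comparing the sums defining \(\gamma_{ij}\) and \(\gamma_{i-1,j+1}\) gives
\begin{equation*}
\gamma_{ij}=-\gamma_{i-1,j+1}-\alpha_{i,j+1},\qquad \gamma_{0j}=-\alpha_{0,j+1}.
\end{equation*}
The lower summation limit \(\max\{0,m-s\}\) needs care near the boundary \(j=s-1\). There \(\alpha_{\ell,s}\) enters, with \(\theta_s=0\), and I would check it separately using \eqref{eq:alpha-subdiag}, which remains valid at index \(s\).

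By induction along each antidiagonal, the theorem then reduces to two scalar identities:
\begin{equation*}
S_{0j}=\alpha_{0,j+1},\qquad S_{ij}+S_{i-1,j+1}=\alpha_{i,j+1}\quad(1\le i\le j).
\end{equation*}
Here \(\alpha\) is given explicitly by \eqref{eq:alpha-subdiag} as a product of \(\eta\)'s times \(\frac{(s-i)(s-j-1)}{s^2}-(-1)^{i+j+1}\).

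The base identity has a single term, \(d_0\mu_{00}\mu_{0j}=\mu_{0j}\), which should be a direct factorial check in the two parity cases \eqref{eq:mu-even}--\eqref{eq:mu-odd}. For the recurrence identity, I would write
\begin{equation*}
S_{ij}+S_{i-1,j+1}=d_i\mu_{ij}+\sum_{k=0}^{i-1}d_k\bigl(\mu_{ki}\mu_{kj}+\mu_{k,i-1}\mu_{k,j+1}\bigr).
\end{equation*}
Because \(\mu_{kj}\) vanishes unless the relevant parity pattern holds, each bracket pairs one ``even'' entry \eqref{eq:mu-even} with one ``odd'' entry \eqref{eq:mu-odd}, according to the parities of \(k-i\) and \(k-j\). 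I would split into the four parity cases of \((i,j)\), or two after using \(i+j\) parity. In each case I would divide by a common factorial normalization so that every summand becomes a ratio of Pochhammer-type products in \(s\).

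The goal is then to exhibit an explicit antidifference \(T_k\) with \(d_k(\cdots)=T_{k+1}-T_k\), so the sum over \(k\) telescopes to \(T_i-T_0\). Combined with \(d_i\mu_{ij}\), this should yield \(\alpha_{i,j+1}\).

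To make this rigorous I would treat \(s\) as a continuous parameter. After normalization both sides become rational functions of \(s\), and factorials such as \((s-1-j)!\) are rewritten as finite products relative to \((2s)!\) and \(s!\). Verifying a rational identity for all real \(s\) (with \(i,j\) fixed) then implies it for the integer values needed. This is the ``rational continuation'' step, and it removes the boundary constraints \(j\le s-1\) from the telescoping.

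The main obstacle is guessing \(T_k\). My first attempt would be to take \(T_k\) proportional to \(d_k\mu_{k,i}\mu_{k,j+1}\) or \(d_k\mu_{k,i-1}\mu_{k,j}\) (the ``mixed'' products) times a simple linear factor in \(k\) and \(s\). I would fix that factor by computing small cases, \(s\le 5\), symbolically, and then verify \(T_{k+1}-T_k\) by taking ratios of consecutive factorial terms. Such ratios collapse to rational expressions of low degree in \(k\).
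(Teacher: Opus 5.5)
Your reduction is the same as the paper's. The antidiagonal recurrence $\gamma_{ij}=-\gamma_{i-1,j+1}-\alpha_{i,j+1}$ is what \eqref{eq:alternating-decomposition} unrolls. Your base case $\mu_{0j}=\alpha_{0,j+1}$ is the paper's computation of $\nu^{(s)}_{1,q}$. Your identity $S_{ij}+S_{i-1,j+1}=\alpha_{i,j+1}$ is exactly Proposition~\ref{prop:adjacent}, taking $p=i$, $q=j+1$ in its $1$-based indexing. Continuing in $s$ to remove the boundary $j=s-1$ is the $F_{pq}$ argument.

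\textbf{The gap.} The adjacent identity carries the whole difficulty, and you do not prove it. ``Fit $T_k$ on $s\le 5$'' is a search strategy, not a proof, and the ansatz you describe has the wrong shape.

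Your premise that $\mu_{kj}$ vanishes unless a parity pattern holds is false. Parity only selects between \eqref{eq:mu-even} and \eqref{eq:mu-odd}, and both give nonzero entries; for $s=3$, $\mu_{01}=-\frac1{10}$ and $\mu_{02}=\frac1{60}$. Hence each $\mu$-factor in $d_k(\mu_{ki}\mu_{kj}+\mu_{k,i-1}\mu_{k,j+1})$ changes formula each time $k$ increases by one. From \eqref{eq:mu-even}--\eqref{eq:mu-odd},
\[
\frac{\mu_{k+1,j}}{\mu_{kj}}=
\begin{cases}
-\dfrac{2(2k+3)(j-k)}{2s-2-k-j}, & k\equiv j \pmod 2,\\[1.5ex]
-\dfrac{2(2k+3)(2s+k-j)}{k+j+2}, & k\not\equiv j \pmod 2,
\end{cases}
\]
and these are two different rational functions of $k$. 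So the products in your summand are hypergeometric in $m=\lfloor k/2\rfloor$ along each parity class of $k$, but not in $k$ itself. A certificate of the form ``mixed product times a linear factor'' valid for all $k$ should therefore not be expected.

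\textbf{What is missing} is the step-two telescoping that the paper constructs.
\begin{itemize}
\item Rows $2m$ and $2m+1$ are treated together, and crosswise: row $2m$ of one of your two sums is combined with row $2m+1$ of the other.
\item By \eqref{eq:prod1}--\eqref{eq:prod4}, each such combination is a fixed prefactor times a single hypergeometric term $\Psi^{(r)}_m$.
\item Lemma~\ref{lem:telescoping-certificates} supplies explicit certificates $\Phi^{(r)}_m=H^{(r)}_m\cC^{(r)}_{3,m}/\Delta^{(r)}$ with $\Psi^{(r)}_m=\Phi^{(r)}_m-\Phi^{(r)}_{m+1}$, $\Phi^{(r)}_0=1$, and $\Phi^{(r)}_m=0$ for large $m$. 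These are checked through the polynomial identities \eqref{eq:C-polynomial-identity}, so each sum equals exactly $1$.
\item The adjacent identity then splits into two terminating sums whose prefactors add up to $\alpha$. For $p=2a-1$ and $q=2b$ in Proposition~\ref{prop:adjacent}, one gets $\frac{(s-b)(2s+1-2a)}{s^2}+\frac{(2a-1)b}{s^2}=\frac{(s-2a+1)(s-2b)}{s^2}+1$.
\end{itemize}
Pairing consecutive rows is the essential move. The crosswise choice is what makes each piece sum to a constant with a short certificate. Without these product reductions and certificates, or some other explicit evaluation of the sum, your argument establishes the reduction but not Theorem~\ref{thm:main-cholesky}.
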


\begin{example}
The first few cases illustrate the structure of the factorization.  For \(s=1\),
\[
    D=(1),\qquad U=(1).
\]
For \(s=2\),
\[
    D=\operatorname{diag}\left(1,\frac1{12}\right),\qquad
    U=
    \begin{pmatrix}
    1&-\frac16\\
    0&1
    \end{pmatrix}.
\]
For \(s=3\),
\[
    D=\operatorname{diag}\left(1,\frac1{12},\frac1{720}\right),\qquad
    U=
    \begin{pmatrix}
    1&-\frac1{10}&\frac1{60}\\
    0&1&-\frac1{10}\\
    0&0&1
    \end{pmatrix}.
\]
These examples provide low-order illustrations of the formula in Theorem~\ref{thm:main-cholesky}; the proof for arbitrary \(s\) is given in Section~\ref{sec:first-subdiagonal-proof}.
\end{example}

\subsection{\(\beta\)-coefficient cancellation}
\begin{theorem}
\label{thm:beta}
For every integer \(s\ge1\), the \((s-1,s)\) Pad\'e approximant satisfies
\begin{equation}
    \beta_k=0\quad(0\le k\le s-1),
    \qquad
    \beta_s=-\left(\frac{(s-1)!}{(2s-1)!}\right)^2.
    \label{eq:beta-zero-last}
\end{equation}
Hence \(B\preceq0\).
\end{theorem}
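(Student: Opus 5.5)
The plan is to read the \(\beta_k\) off a generating function rather than manipulate the alternating sums \eqref{eq:beta-def} directly. By \eqref{eq:pq-minus-specialization}, the coefficient of \(x^{m}\) in \(\calP(x)\calP(-x)-\calQ(x)\calQ(-x)\) is \(\sum_{i+j=m}(-1)^j\alpha_{ij}\). For odd \(m\) this vanishes, since the polynomial is even. For \(m=2k\) we have \((-1)^{j}=(-1)^{2k-\ell}=(-1)^{\ell}\), which agrees with \((-1)^{k-\ell}\) up to the global sign \((-1)^k\). Hence
\begin{equation*}
    \calP_s(x)\calP_s(-x)-\calQ_s(x)\calQ_s(-x)=\sum_{k=0}^{s}(-1)^k\beta_k x^{2k}.
\end{equation*}
The theorem therefore reduces to showing that the left-hand side equals \(-(-1)^s c^2x^{2s}\), with \(c=(s-1)!/(2s-1)!\).

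The vanishing of the lower coefficients follows from the order of the Pad\'e approximant. Since \(\calR_s\) has order \(2s-1\), we have \(\calP_s(x)-e^{x}\calQ_s(x)=O(x^{2s})\). Replacing \(x\) by \(-x\) gives \(\calP_s(-x)=e^{-x}\calQ_s(-x)+O(x^{2s})\). Multiplying these two relations yields
\begin{equation*}
    \calP_s(x)\calP_s(-x)=\calQ_s(x)\calQ_s(-x)+O(x^{2s}).
\end{equation*}
Thus the difference is an even polynomial of degree at most \(2s\) whose coefficients of \(x^{0},\dots,x^{2s-1}\) all vanish. This gives \(\beta_k=0\) for \(0\le k\le s-1\).

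For the top coefficient, note that \(\calP_s\) has degree \(s-1\), so \(\calP_s(x)\calP_s(-x)\) has degree \(2s-2\) and does not contribute to \(x^{2s}\). The coefficient of \(x^{2s}\) in \(-\calQ_s(x)\calQ_s(-x)\) is \(-\vartheta_s\cdot(-1)^s\vartheta_s=-(-1)^s\vartheta_s^2\). Therefore \((-1)^s\beta_s=-(-1)^s\vartheta_s^2\), that is, \(\beta_s=-\vartheta_s^2\). From \eqref{eq:subdiag-coefficients},
\begin{equation*}
    \vartheta_s=(-1)^s\frac{s!}{(2s-1)!}\frac{(s-1)!}{s!\,0!}=(-1)^s\frac{(s-1)!}{(2s-1)!},
\end{equation*}
which gives the stated value of \(\beta_s\). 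Finally, \(B\) is diagonal with entries \(0,\dots,0,\beta_s\) and \(\beta_s<0\), so \(B\preceq0\).

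The main point needing care is the sign bookkeeping between \((-1)^{k-\ell}\) in \eqref{eq:beta-def} and \((-1)^j\) in \eqref{eq:pq-minus-specialization}, together with the summation ranges in \eqref{eq:beta-def}. Those ranges agree with \(i+j=2k\), \(0\le i,j\le s\), so nothing is lost. The order condition \(2s-1\) is the standard property of the \((s-1,s)\) Pad\'e approximant. If one prefers a self-contained argument, it can be checked against \eqref{eq:subdiag-coefficients} via the classical Pad\'e error formula.
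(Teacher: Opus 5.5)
Your proposal is correct and takes essentially the same route as the paper. Both arguments derive $\calP_s(x)\calP_s(-x)-\calQ_s(x)\calQ_s(-x)=O(x^{2s})$ from the Pad\'e matching property, identify the coefficient of $x^{2k}$ as $(-1)^k\beta_k$ via \eqref{eq:pq-minus-specialization} (the same sign and range bookkeeping as \eqref{eq:z2k-coefficient-beta}), and read off $\beta_s=-\vartheta_s^2$ from the leading coefficient of $\calQ_s$, since $\deg\calP_s=s-1$.
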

\begin{proof}[Proof of Theorem~\ref{thm:beta}]
We use the Pad\'e matching property. The \((s-1,s)\) Pad\'e approximant satisfies
\begin{equation}
    \frac{\calP_s(z)}{\calQ_s(z)}=e^z+\mathcal O(z^{2s})
    \qquad (z\to0).
\end{equation}
Since \(\calQ_s(0)=1\), this matching property is equivalent to
\[
    \calP_s(z)=\calQ_s(z)e^z+\mathcal O(z^{2s}).
\]
Replacing \(z\) by \(-z\) and multiplying the two relations gives
\begin{equation}
    \calP_s(z)\calP_s(-z)-\calQ_s(z)\calQ_s(-z)=\mathcal O(z^{2s}).
\end{equation}
The left-hand side is an even polynomial of degree at most \(2s\). Therefore it must be a multiple of \(z^{2s}\). Since \(\deg\calP_s=s-1\) and the leading coefficient of \(\calQ_s\) is
\begin{equation}
    \vartheta_s=(-1)^s\frac{(s-1)!}{(2s-1)!},
\end{equation}
the coefficient of \(z^{2s}\) in
\(\calP_s(z)\calP_s(-z)-\calQ_s(z)\calQ_s(-z)\) is
\begin{equation}
    -(-1)^s\vartheta_s^2
    =
    (-1)^{s+1}\left(\frac{(s-1)!}{(2s-1)!}\right)^2 .
\end{equation}
Thus
\begin{equation}
    \calP_s(z)\calP_s(-z)-\calQ_s(z)\calQ_s(-z)
    =
    (-1)^{s+1}\left(\frac{(s-1)!}{(2s-1)!}\right)^2z^{2s}.
    \label{eq:PPQQ-identity}
\end{equation}
On the other hand, by \eqref{eq:pq-minus-specialization} and the definition \eqref{eq:beta-def}, the coefficient of \(z^{2k}\) in the left-hand side of \eqref{eq:PPQQ-identity} is
\begin{equation}
    \sum_{\ell=\max\{0,2k-s\}}^{\min\{2k,s\}}
    (-1)^{2k-\ell}\alpha_{\ell,2k-\ell}
    =
    (-1)^k\beta_k .
    \label{eq:z2k-coefficient-beta}
\end{equation}
Comparing coefficients in \eqref{eq:PPQQ-identity} and using \eqref{eq:z2k-coefficient-beta}, we obtain
\[
    \beta_0=\beta_1=\cdots=\beta_{s-1}=0
\]
and
\begin{equation}
    (-1)^s\beta_s
    =
    (-1)^{s+1}\left(\frac{(s-1)!}{(2s-1)!}\right)^2 .
\end{equation}
Therefore
\begin{equation}
    \beta_s
    =
    -\left(\frac{(s-1)!}{(2s-1)!}\right)^2,
\end{equation}
which proves the theorem.
\end{proof}

\subsection{Discrete energy law and unconditional contractivity}
\begin{theorem}
\label{thm:energy-law}
For every integer \(s\ge1\), consider the time-stepping scheme \(u^{n+1}=\calR_s(\tau L)u^n\), obtained by applying the \((s-1,s)\) Pad\'e approximant to the linear seminegative system \eqref{eq:linear-system}. Let \(w^n=Q^{-1}u^n\), where \(Q=\calQ_s(\tau L)\). Then
\begin{equation}
    \|u^{n+1}\|^2-\|u^n\|^2
    =
    -\left(\frac{(s-1)!}{(2s-1)!}\right)^2
    \tau^{2s}\|L^s w^n\|^2
    -
    \sum_{k=0}^{s-1}
    d_k\tau^{2k+1}
    \sn{L^k u^{(k)}}^2,
\label{eq:main-energy-law}
\end{equation}
where
\begin{equation}
    u^{(k)}
    :=
    \sum_{j=k}^{s-1}
    \mu_{kj}(\tau L)^{j-k}Q^{-1}u^n .
    \label{eq:uk-def}
\end{equation}
Consequently,
\begin{equation}
    \|u^{n+1}\|\le \|u^n\|,
    \qquad \forall \tau>0.
    \label{eq:strong-stability}
\end{equation}
Since the inequality holds for every initial value \(u^n\in V\), the one-step operator is contractive:
\begin{equation}
    \|\calR_s(\tau L)\|_{\mathcal L(V)}\le 1,
    \qquad \forall \tau>0.
\end{equation}
Thus the resulting \((s-1,s)\) Pad\'e time-stepping method is unconditionally strongly stable for the linear seminegative systems with bounded operators considered here.
\end{theorem}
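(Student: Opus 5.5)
The plan is to assemble Theorem~\ref{thm:energy-law} directly from the earlier results, since all the algebraic work sits in Theorems~\ref{thm:main-cholesky} and~\ref{thm:beta}.

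\emph{Step 1: the scheme is well defined.} By Corollary~\ref{cor:subdiag-denominator-invertible}, \(Q=\calQ_s(\tau L)\) is invertible for every \(\tau>0\). Hence \(w^n=Q^{-1}u^n\) exists, and since \(P\) and \(Q\) commute we have \(u^{n+1}=Pw^n\) and \(u^n=Qw^n\), exactly as in the setting of Lemma~\ref{lem:quoted-general-energy}.

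\emph{Step 2: the energy identity.} We apply Lemma~\ref{lem:quoted-general-energy} with the coefficients \eqref{eq:subdiag-coefficients}. This gives \eqref{eq:discrete-energy-general} with the \(\beta_k\) and \(\gamma_{ij}\) of the \((s-1,s)\) Padé approximant.

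\emph{Step 3: the norm terms.} By Theorem~\ref{thm:beta}, \(\beta_k=0\) for \(k<s\) and \(\beta_s=-((s-1)!/(2s-1)!)^2\). So the first sum in \eqref{eq:discrete-energy-general} collapses to the single term
\[
-\left(\tfrac{(s-1)!}{(2s-1)!}\right)^2\tau^{2s}\|L^sw^n\|^2 .
\]

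\emph{Step 4: the semi-inner-product terms.} Theorem~\ref{thm:main-cholesky} gives \(\Up=-U^TDU\), with \(U\) upper triangular and \(d_k=(k!)^2/((2k)!(2k+1)!)>0\). Lemma~\ref{lem:quoted-factorization} therefore converts the second sum into
\[
-\sum_{k} d_k\tau^{2k+1}\sn{L^k u^{(k)}}^2,
\]
where \(u^{(k)}\) is defined with \(w^n=Q^{-1}u^n\), which is precisely \eqref{eq:uk-def}. In expanding the quadratic form row by row one should verify the bookkeeping:
\[
\sum_{j\ge k}\mu_{kj}\tau^{j}L^{j}w^n=\tau^{k}L^k u^{(k)}.
\]
Indeed,
\[
\sum_{i,j}\gamma_{ij}\tau^{i+j+1}[L^iw,L^jw]_L=-\sum_k d_k\,\tau\,\bigl[\textstyle\sum_j\mu_{kj}\tau^jL^jw,\ \sum_j\mu_{kj}\tau^jL^jw\bigr]_L ,
\]
using bilinearity and symmetry of \([\cdot,\cdot]_L\). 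Combining Steps 3 and 4 yields \eqref{eq:main-energy-law}.

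\emph{Step 5: contractivity.} Every term on the right of \eqref{eq:main-energy-law} is nonpositive: \(d_k>0\), \(\sn{\cdot}^2\ge0\) by seminegativity of \(L\), and the norm term carries a negative coefficient. Hence \(\|u^{n+1}\|\le\|u^n\|\). Since \(u^n\in V\) is arbitrary and \(u^{n+1}=\calR_s(\tau L)u^n\), taking the supremum over \(u^n\ne0\) gives \(\|\calR_s(\tau L)\|_{\mathcal L(V)}\le1\).

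There is no substantive obstacle here once Theorems~\ref{thm:main-cholesky} and~\ref{thm:beta} are granted. The only point requiring care is the index bookkeeping in Step 4, namely that the factor \(\tau^{2k+1}\) and the power \(L^k\) come out correctly from \(\tau^{i+j+1}\) after factoring \(U\).
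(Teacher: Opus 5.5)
Your proposal is correct and follows essentially the same route as the paper: well-definedness from Corollary~\ref{cor:subdiag-denominator-invertible}, the general identity of Lemma~\ref{lem:quoted-general-energy}, Theorem~\ref{thm:beta} for the norm terms, Theorem~\ref{thm:main-cholesky} with the decomposition principle for the semi-inner-product terms, and then the sign argument for contractivity. Your explicit check that $\sum_{j\ge k}\mu_{kj}\tau^{j}L^{j}w^n=\tau^{k}L^{k}u^{(k)}$, and your supremum step for the operator-norm bound, fill in details that the paper states only in one line.
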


\begin{proof}[Proof of Theorem~\ref{thm:energy-law}]
By Corollary~\ref{cor:subdiag-denominator-invertible}, the Pad\'e step is well-defined for every $\tau>0$. Substituting Theorem~\ref{thm:beta} into the general identity \eqref{eq:discrete-energy-general} gives
\begin{equation}
\sum_{k=0}^{s}\beta_k\tau^{2k}\|L^k w^n\|^2
=
-
\left(\frac{(s-1)!}{(2s-1)!}\right)^2
\tau^{2s}\|L^s w^n\|^2.
\end{equation}
By Theorem~\ref{thm:main-cholesky} and the decomposition $\Up=-U^TDU$,
\begin{equation}
\sum_{i=0}^{s-1}\sum_{j=0}^{s-1}
\gamma_{ij}\tau^{i+j+1}[L^i w^n,L^j w^n]_L
=
-
\sum_{k=0}^{s-1}d_k\tau^{2k+1}\sn{L^k u^{(k)}}^2,
\end{equation}
where $u^{(k)}$ is defined by \eqref{eq:uk-def}. Combining the two identities proves \eqref{eq:main-energy-law}. Since every term on the right-hand side is nonpositive, \eqref{eq:strong-stability} follows immediately.
\end{proof}

\begin{remark}[Comparison with algebraic stability]
The contractivity conclusion in Theorem~\ref{thm:energy-law} is consistent with the algebraic stability of the corresponding Radau IIA methods~\cite{HairerWanner1981AlgebraicallyStable,HairerWanner1996SolvingODEII}. The algebraic-stability framework applies to a broad class of nonlinear dissipative problems through a stage-based energy relation. For the linear problem considered here, \eqref{eq:main-energy-law} provides a more refined characterization by making the orders of the dissipation terms explicit. In particular, when $L$ is skew-adjoint, the semi-inner-product terms vanish and the energy law reduces exactly to the $\tau^{2s}$ norm-dissipation term, thereby exposing a high-order structure that is not apparent in the standard stage representation. In addition, Theorem~\ref{thm:main-cholesky} gives a unified closed-form factorization of the energy coefficient matrix valid for every $s$, rather than a method-specific decomposition at a fixed order.
\end{remark}

\section{Proof of the Cholesky-type factorization}
\label{sec:first-subdiagonal-proof}
This section proves the Cholesky-type factorization in Theorem~\ref{thm:main-cholesky}. The difficulty is that the entries of \(\Up\) are alternating sums of Pad\'e coefficients, whereas the proposed triangular factor \(U\) is given by parity-dependent factorial expressions. We reduce the matrix identity to scalar rational identities and prove them by finite telescoping arguments. The longer algebraic verifications are collected in Appendix~\ref{app:algebraic-verifications}.

\subsection{Reduction to an extended scalar identity}

We now carry out this reduction. The proof follows the rational-extension strategy used for diagonal Pad\'e energy laws in~\cite{SunWeiWu2022EnergyLaws}, but the scalar identities and parity structure are specific to the first-subdiagonal family.

For $i\ge0$, define
\begin{equation}
    \eta_i^{(s)}
    :=
    \frac{1}{i!}
    \frac{s(s-1)\cdots(s-i+1)}{(2s-1)(2s-2)\cdots(2s-i)},
    \qquad \eta_0^{(s)}:=1,
    \label{eq:eta-rational}
\end{equation}
and
\begin{equation}
    \theta_i^{(s)}:=\frac{s-i}{s}\eta_i^{(s)}.
    \label{eq:theta-rational}
\end{equation}
For positive integer $s$, these definitions agree with \eqref{eq:eta-def} and \eqref{eq:theta-vartheta-eta} whenever $0\le i\le s$. Define
\begin{equation}
    \alpha_{ij}^{(s)}
    :=
    \theta_i^{(s)}\theta_j^{(s)}
    -(-1)^{i+j}\eta_i^{(s)}\eta_j^{(s)}
    =
    \left(\frac{(s-i)(s-j)}{s^2}-(-1)^{i+j}\right)
    \eta_i^{(s)}\eta_j^{(s)}.
    \label{eq:alpha-rational}
\end{equation}
For $p,q\ge0$, set
\begin{equation}
    \gamma_{pq}^{(s)}
    :=
    \sum_{\ell=0}^{\min\{p,q\}}
    (-1)^{\min\{p,q\}+1-\ell}
    \alpha_{\ell,p+q+1-\ell}^{(s)}.
    \label{eq:gamma-rational}
\end{equation}
When $s$ is a positive integer and $0\le p,q\le s-1$, \eqref{eq:gamma-rational} reduces to \eqref{eq:gamma-def}.  Indeed, every index of the form $p+q+1-\ell$ is at most $2s-1$; if such an index is larger than $s$, then the numerator of \(\eta_{p+q+1-\ell}^{(s)}\) contains a zero factor whereas no denominator factor vanishes, and the corresponding term is zero.

Let
\begin{equation}
    \hR:=\{x\in\R:2x\notin\Z\}.
\end{equation}
For $i,j\ge1$ and $s\in\hR$, define the auxiliary quantities $\nu_{ij}^{(s)}$ by
\begin{align}
\nu_{2i-1,2j}^{(s)}
&=
\frac{2\sqrt{4i-3}}{s}
\frac{
\left(s-j+\frac12\right)_{i-1}(-j)_i
}{
(j-s+1)_{i-1}
\left(j+\frac12\right)_{i-1}
}
\eta_{2j}^{(s)},
\label{eq:nu-oe}
\\
\nu_{2i,2j-1}^{(s)}
&=
-\frac{2\sqrt{4i-1}}{s}
\frac{
\left(s-j+\frac12\right)_i(1-j)_i
}{
(j-s)_i
\left(j+\frac12\right)_{i-1}
}
\eta_{2j-1}^{(s)},
\label{eq:nu-eo}
\\
\nu_{2i,2j}^{(s)}
&=
-\frac{2\sqrt{4i-1}}{s}
\frac{
\left(s+\frac12-j\right)_i(-j)_i
}{
(j-s+1)_{i-1}
\left(j+\frac12\right)_i
}
\eta_{2j}^{(s)},
\label{eq:nu-ee}
\\
\nu_{2i-1,2j-1}^{(s)}
&=
\frac{2\sqrt{4i-3}}{s}
\frac{
\left(s+\frac12-j\right)_i(1-j)_{i-1}
}{
(j-s)_{i-1}
\left(j+\frac12\right)_{i-1}
}
\eta_{2j-1}^{(s)}.
\label{eq:nu-oo}
\end{align}
Here $(x)_n$ denotes the Pochhammer symbol,
\begin{equation}
    (x)_0:=1,
    \qquad
    (x)_n:=x(x+1)\cdots(x+n-1),\quad n\ge1.
\end{equation}
The following elementary identities will be used as rational identities. When factorial notation is applied to noninteger arguments, it is understood through the Gamma function away from its poles.
\begin{lemma}[Basic Pochhammer reductions]
\label{lem:pochhammer-basic}
The following identities hold away from the poles, and hence as rational identities after simplification:
\begin{align}
    (x+n)!&=x!(x+1)_n,\label{eq:pochhammer-shift}\\
    (x)_n&=2^n
    \left(\frac{x}{2}\right)_{\lceil n/2\rceil}
    \left(\frac{x+1}{2}\right)_{\lfloor n/2\rfloor},
    \label{eq:pochhammer-duplication}\\
    \frac{(x+i)!}{(x-j)!}&=(-1)^j(-x)_j(x+1)_i.
    \label{eq:pochhammer-ratio}
\end{align}
\end{lemma}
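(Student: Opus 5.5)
We prove the three identities of Lemma~\ref{lem:pochhammer-basic} first for nonnegative integer data by elementary product manipulations, and then pass to the rational (Gamma-function) setting. Throughout we write \(x!:=\Gamma(x+1)\), and we read each identity as a relation between meromorphic functions of \(x\) for fixed integers \(n,i,j\ge0\).

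For \eqref{eq:pochhammer-shift}, we use the functional equation \(\Gamma(y+1)=y\,\Gamma(y)\) and induct on \(n\). The case \(n=0\) is trivial. For the inductive step,
\[
(x+n+1)!=(x+n+1)(x+n)!=(x+n+1)\,x!\,(x+1)_n=x!\,(x+1)_{n+1},
\]
valid wherever \(x!\) is finite. For \eqref{eq:pochhammer-ratio}, we apply \eqref{eq:pochhammer-shift} twice, with base point \(x-j\):
\[
\frac{(x+i)!}{(x-j)!}=\frac{(x-j)!\,(x-j+1)_{i+j}}{(x-j)!}=(x-j+1)_{i+j}.
\]
We then split the product into the \(j\) factors \(x-j+1,\dots,x\) and the \(i\) factors \(x+1,\dots,x+i\). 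The first block equals \(x(x-1)\cdots(x-j+1)=(-1)^j(-x)(-x+1)\cdots(-x+j-1)=(-1)^j(-x)_j\), and the second block is \((x+1)_i\).

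For the duplication formula \eqref{eq:pochhammer-duplication}, we write \((x)_n=\prod_{m=0}^{n-1}(x+m)\) and separate even \(m=2r\) from odd \(m=2r+1\). The even indices \(m=2r\) with \(0\le r\le\lceil n/2\rceil-1\) give \(\prod_r 2\,(x/2+r)=2^{\lceil n/2\rceil}(x/2)_{\lceil n/2\rceil}\). The odd indices \(m=2r+1\) with \(0\le r\le\lfloor n/2\rfloor-1\) give \(2^{\lfloor n/2\rfloor}((x+1)/2)_{\lfloor n/2\rfloor}\). Since \(\lceil n/2\rceil+\lfloor n/2\rfloor=n\), the powers of \(2\) combine to \(2^n\). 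This argument needs no restriction on \(x\), since both sides are polynomials.

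It remains to justify the phrase ``as rational identities after simplification.'' In each of \eqref{eq:pochhammer-shift} and \eqref{eq:pochhammer-ratio}, once we divide by the common Gamma factor (\(x!\), respectively \((x-j)!\)), both sides become polynomials in \(x\). The identities hold on the open set where the Gamma values are finite and nonzero, so the polynomial identities hold everywhere. This is the step where care is needed, though it is not a real obstacle. In the later application, quotients such as \((s-1-j)!\) with \(s\in\hR\) are evaluated only at half-integer-free points, where \(\Gamma\) has neither poles nor zeros. That is exactly why the extension to \(\hR\) is used, and the resulting rational identities then continue to integer \(s\).
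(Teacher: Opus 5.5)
Your proof is correct: the paper states these three identities as elementary facts without giving a proof, and your argument is exactly the routine verification it leaves implicit. That argument is induction on $n$ via $\Gamma(y+1)=y\,\Gamma(y)$; then one application of \eqref{eq:pochhammer-shift} at base point $x-j$, splitting $(x-j+1)_{i+j}$ into its $j$ lower and $i$ upper factors; then separating the even- and odd-indexed factors of $(x)_n$. Two cosmetic points: \eqref{eq:pochhammer-shift} is used once, not twice, in deriving \eqref{eq:pochhammer-ratio}, and $\Gamma$ has no zeros, so ``finite and nonzero'' just means avoiding the poles.
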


The factors $(-j)_i$ or $(1-j)_i$ imply $\nu_{ij}^{(s)}=0$ whenever $i>j$. The formulas \eqref{eq:nu-oe}--\eqref{eq:nu-oo} are first used for $s\in\hR$, where the displayed Pochhammer denominators do not vanish. They will later be specialized to positive integer $s$ through rational continuation.

\begin{lemma}[Connection with the Cholesky factor]
\label{lem:nu-mu-connection}
For every positive integer $s$ and $1\le i,j\le s$, the rational continuation of $\nu_{ij}^{(s)}$ to this integer value satisfies
\begin{equation}
    \nu_{ij}^{(s)}
    =
    \sqrt{d_{i-1}}\,\mu_{i-1,j-1}.
    \label{eq:nu-mu-relation}
\end{equation}
\end{lemma}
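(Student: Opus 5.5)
The plan is a direct, case-by-case verification. First fix the parity classes. Index $\nu$ by $(i,j)$ with $1\le i,j\le s$, so $\mu_{i-1,j-1}$ has row index $i-1$ and column index $j-1$. The four formulas \eqref{eq:nu-oe}--\eqref{eq:nu-oo} correspond to the four parity pairs $(i-1,j-1)$. For example, $\nu_{2a-1,2b}$ corresponds to $\mu_{2a-2,2b-1}$, which has opposite parities and so is given by \eqref{eq:mu-odd}. Likewise $\nu_{2a,2b-1}$ corresponds to $\mu_{2a-1,2b-2}$ (opposite parity, \eqref{eq:mu-odd}), while $\nu_{2a,2b}$ and $\nu_{2a-1,2b-1}$ correspond to same-parity entries given by \eqref{eq:mu-even}.

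The case $i>j$ is handled before any simplification. There the factors $(-b)_a$ or $(1-b)_a$, resp.\ $(1-b)_{a-1}$, vanish. We must check that the rational continuation is still zero at integer $s$, i.e. that no denominator Pochhammer $(b-s+1)_{a-1}$ or $(b-s)_a$ vanishes simultaneously. For $b\le s$ these products could contain zero factors, so the claim is about the continuation as a rational function of $s$. I would therefore cancel such factors against the factors of $\eta^{(s)}_{2b}$ (which contains $s(s-1)\cdots(s-2b+1)$ in its numerator) before specializing.

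Next handle the diagonal prefactor. Write $\sqrt{d_{i-1}}=\frac{(i-1)!}{\sqrt{(2i-2)!(2i-1)!}}$. The $\sqrt{4a-3}$ or $\sqrt{4a-1}$ in $\nu$ must combine with it, since $(2i-1)!/(2i-2)!=2i-1$ equals $4a-3$ for $i=2a-1$ and $4a-1$ for $i=2a$. After squaring both sides, or dividing out $\sqrt{2i-1}$, the identity becomes purely rational. I would divide \eqref{eq:nu-mu-relation} by $\sqrt{d_{i-1}}$ and reduce everything to a ratio of factorials/Pochhammers.

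Then, in each of the four cases, expand both sides in Pochhammer form using Lemma~\ref{lem:pochhammer-basic}:

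(i) Rewrite $\eta^{(s)}_{m}$ via \eqref{eq:pochhammer-ratio} as $\frac{s!\,(2s-m-1)!}{m!\,(s-m)!\,(2s-1)!}$, which is valid for integer $s\ge m$ after cancellation.

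(ii) Rewrite the half-integer Pochhammers $(s-b+\tfrac12)_a$ and $(b+\tfrac12)_{a}$ with the duplication formula \eqref{eq:pochhammer-duplication}, read in reverse. This turns them into ratios such as $(2s-2b+2a)!/(2s-2b)!$ times powers of $2$ and ratios $(s-b+a)!/(s-b)!$. These should produce exactly the factors $(2s+i-j)!$ and $(s-1-\tfrac{i+j}{2})!$ appearing in \eqref{eq:mu-even}, and the factors $(2s+i-j-2)!$ and $(s-1-\tfrac{i+j+1}{2})!$ appearing in \eqref{eq:mu-odd}.

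(iii) Rewrite $(-b)_a$ and $(b-s+1)_{a-1}$ via \eqref{eq:pochhammer-ratio} as signed factorial ratios, for instance $(-b)_a=(-1)^a b!/(b-a)!$, which yields the $(\tfrac{j-i}{2})!$ factors.

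Collecting powers of $2$, signs and factorials should reproduce the closed forms of $\mu$ exactly. Before doing the general case, I would sanity check the result against the examples for $s=2,3$.

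The main obstacle is the bookkeeping in step (ii)--(iii). The signs $(-1)^a$ from the negative-argument Pochhammers must cancel against the signs from $(b-s)_a$, which equals $(-1)^a (s-b-a+1)_a$. The powers of $2$ from duplication must cancel as well. The $\tfrac{2}{s}$ together with $s!/(2s-1)!$ from $\eta$ must become $s!/(2s)!$ (note $2s\cdot(2s-1)!=(2s)!$ absorbs the $2/s$ up to $s$ factors), and in the odd case the extra $2$ appears as the explicit $2s!$. I expect the odd-parity cases \eqref{eq:nu-oe}, \eqref{eq:nu-eo} to be the most delicate, since the half-integer shifts $\tfrac{i+j\pm1}{2}$ are asymmetric. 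I would work them out first, carefully tracking each factor.
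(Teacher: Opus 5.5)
Your plan is correct and is essentially the paper's own argument. Both do a parity-by-parity reduction of \eqref{eq:nu-oe}--\eqref{eq:nu-oo} against \eqref{eq:mu-even}--\eqref{eq:mu-odd} using Lemma~\ref{lem:pochhammer-basic}; the paper packages the factorials into two blocks, as in \eqref{eq:app-oe-block1}--\eqref{eq:app-oe-block2}. For $i>j$, the factors $(-b)_a$, $(1-b)_a$, $(1-b)_{a-1}$ vanish identically in $s$, so $\nu$ is the zero rational function there and your proposed cancellation against $\eta^{(s)}_{2b}$ is unnecessary.

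One bookkeeping correction to your step (ii). The factor $(s-1-\tfrac{i+j}{2})!$ in \eqref{eq:mu-even}, and $(s-1-\tfrac{i+j+1}{2})!$ in \eqref{eq:mu-odd}, does not come from the duplication formula. It comes from the $s$-dependent denominator Pochhammer $(b-s+1)_{a-1}$, $(b-s)_a$ or $(b-s)_{a-1}$ via \eqref{eq:pochhammer-ratio}. The duplication of $(s-b+\tfrac12)_{\cdot}$ instead supplies $(2s+i-j)!$ together with $(s-\tfrac{j-i}{2})!$ in the same-parity case, and $(2s+i-j-2)!$ together with $(s-1-\tfrac{j-i+1}{2})!$ in the opposite-parity case.
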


\begin{proof}
The four parity cases give the following identities after rational continuation to positive integer values of \(s\):
\begin{align}
\nu_{2i-1,2j}^{(s)}&=\sqrt{d_{2i-2}}\,\mu_{2i-2,2j-1},
\label{eq:nu-mu-oe}\\
\nu_{2i,2j-1}^{(s)}&=\sqrt{d_{2i-1}}\,\mu_{2i-1,2j-2},
\label{eq:nu-mu-eo}\\
\nu_{2i,2j}^{(s)}&=\sqrt{d_{2i-1}}\,\mu_{2i-1,2j-1},
\label{eq:nu-mu-ee}\\
\nu_{2i-1,2j-1}^{(s)}&=\sqrt{d_{2i-2}}\,\mu_{2i-2,2j-2}.
\label{eq:nu-mu-oo}
\end{align}
They are obtained by simplifying the four formulas \eqref{eq:nu-oe}--\eqref{eq:nu-oo} with the two definitions \eqref{eq:mu-even}--\eqref{eq:mu-odd}.  The necessary Pochhammer-to-factorial reductions and the treatment of removable singularities are recorded in Appendix~\ref{app:nu-mu-verification}.  These four identities are exactly \eqref{eq:nu-mu-relation} for the four possible parities of \((i,j)\).
\end{proof}

The scalar identity needed for the matrix factorization is the following.
\begin{theorem}[Extended scalar identity]
\label{thm:extended-scalar}
For all $p,q\in\Npos$ and all $s\in\hR$,
\begin{equation}
    \gamma_{p-1,q-1}^{(s)}
    +
    \sum_{i=1}^{\infty}
    \nu_{i,p}^{(s)}\nu_{i,q}^{(s)}
    =0.
    \label{eq:scalar-identity-goal}
\end{equation}
The infinite sum is finite because $\nu_{i,p}^{(s)}=0$ for $i>p$.
\end{theorem}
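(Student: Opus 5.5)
We prove \eqref{eq:scalar-identity-goal} by induction on \(\min\{p,q\}\), exploiting the triangular structure of \(\nu^{(s)}\). We also use the symmetry \(\gamma_{p-1,q-1}^{(s)}=\gamma_{q-1,p-1}^{(s)}\), which is visible from \eqref{eq:gamma-rational} since the summation depends only on \(\min\{p,q\}\) and \(p+q\). By this symmetry we may assume \(p\le q\). The sum then runs only over \(1\le i\le p\), because \(\nu_{i,p}^{(s)}=0\) for \(i>p\).

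The inductive step is a comparison of consecutive differences. Write
\[
S_{p,q}:=\gamma_{p-1,q-1}^{(s)}+\sum_{i=1}^{p}\nu_{i,p}^{(s)}\nu_{i,q}^{(s)}.
\]
Increasing \(p\) by one and decreasing \(q\) by one keeps \(p+q\) fixed. From \eqref{eq:gamma-rational}, the two \(\gamma\)'s involve the same \(\alpha_{\ell,p+q-1-\ell}\), with one extra term and an overall sign flip. Their sum therefore collapses to the single boundary term \(\pm\alpha^{(s)}_{p,q-1}\) (up to an index shift). We would then aim to prove the local identity
\[
S_{p+1,q-1}+S_{p,q}=\text{(explicit)}=0,
\]
or an equivalent recursion. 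Concretely, the claim to be shown is
\[
\sum_{i}\bigl(\nu_{i,p+1}\nu_{i,q-1}+\nu_{i,p}\nu_{i,q}\bigr)=\pm\alpha_{p,q-1}^{(s)}.
\]
This relation propagates the identity along each antidiagonal \(p+q=\mathrm{const}\). The anchor is the extreme point \(p\) or \(q\) small, say \(p=1\), where the sum has a single term \(\nu_{1,1}\nu_{1,q}\) and \(\gamma_{0,q-1}=-\alpha_{0,q}\). That base case is an explicit check of the product of \eqref{eq:nu-oo} or \eqref{eq:nu-oe} with \(i=1\) against \eqref{eq:alpha-rational}, using \(\eta_q\) and Lemma~\ref{lem:pochhammer-basic}.

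For the local identity, we split into the four parity classes of \((p,q)\). In each class the summands \(\nu_{i,p}\nu_{i,q}\) are products of ratios of Pochhammer symbols times \(\eta_p\eta_q\) times \((4i-3)\) or \((4i-1)\). After pulling out \(\eta_p\eta_q/s^2\), we group the terms with indices \(2i-1\) and \(2i\) together. The target is to exhibit each grouped summand as a forward difference \(T_{i+1}-T_i\) of an explicit rational function \(T_i\) of \(s\), \(p\), \(q\); the natural guess is a product of the same Pochhammer factors with \(i\) shifted. The weights \(4i-3\) and \(4i-1\) are exactly what such differences of hypergeometric-type terms produce, as in a Gosper/Zeilberger certificate. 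Telescoping then leaves \(T_{\text{end}}-T_1\). We would show that this equals \(\pm\alpha^{(s)}_{p,q-1}\), or directly \(-\gamma\), after simplifying with \eqref{eq:pochhammer-duplication} to merge half-integer Pochhammers with the factorial ratios in \(\eta\). Because \(s\in\hR\), no denominator vanishes, so all manipulations are legitimate rational identities.

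An alternative, possibly cleaner route is to prove the identity directly for fixed \(p\le q\), without induction. We would evaluate \(\gamma^{(s)}_{p-1,q-1}\) in closed form by telescoping the alternating sum over \(\ell\), since \(\alpha_{\ell,m-\ell}\) is a difference of two hypergeometric terms in \(\ell\). We would then evaluate \(\sum_i\nu_{i,p}\nu_{i,q}\) by the telescoping above and compare the two closed forms.

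The main obstacle is finding the antidifference \(T_i\) for each parity class and verifying \(T_{i+1}-T_i\) equals the paired summand. This is where the parity-dependent Pochhammer structure makes the bookkeeping delicate. I would first guess \(T_i\) from small cases, for instance by computing symbolically for \(p\le 4\) with \(s\) generic. I would then verify the difference by reducing the ratio \((T_{i+1}-T_i)/(\text{summand})\) to a polynomial identity in \(i,j,s\) of low degree.
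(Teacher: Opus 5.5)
Your plan is correct and is essentially the paper's proof. The paper likewise reduces to $p\le q$ and anchors at $p=1$ via $\nu^{(s)}_{1,1}=1$ and $\gamma^{(s)}_{0,q-1}=-\alpha^{(s)}_{0,q}$. It then moves along each antidiagonal $p+q=\text{const}$ using exactly your local identity with the sign $+$, namely $\sum_i\nu^{(s)}_{i,p}\nu^{(s)}_{i,q}+\sum_i\nu^{(s)}_{i,p+1}\nu^{(s)}_{i,q-1}=\alpha^{(s)}_{p,q-1}$; this is Proposition~\ref{prop:adjacent} after the shift $q\to q-1$. The paper writes this as an unrolled alternating decomposition rather than as an induction.

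The antidifferences you flag as the main obstacle are supplied there as the explicit certificates $\Phi^{(r)}_n$, $r=1,\dots,4$, of Lemma~\ref{lem:telescoping-certificates}. The grouping that telescopes is crosswise: row $2i-1$ of one of the two products is paired with row $2i$ of the other, as in \eqref{eq:prod1}--\eqref{eq:prod4}. Each pair then contributes an explicit prefactor times $\sum_n\Psi^{(r)}_n=1$, and these prefactors add up to $\alpha^{(s)}_{p,q-1}$.
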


We postpone the proof of Theorem~\ref{thm:extended-scalar} to the next subsections and first complete the proof of Theorem~\ref{thm:main-cholesky} assuming it.

\begin{proof}[Proof of Theorem~\ref{thm:main-cholesky} assuming Theorem~\ref{thm:extended-scalar}]
For fixed $p,q\in\Npos$, define
\begin{equation}
    F_{pq}(s)
    :=
    \gamma_{p-1,q-1}^{(s)}
    +
    \sum_{i=1}^{\infty}
    \nu_{i,p}^{(s)}\nu_{i,q}^{(s)}.
    \label{eq:Fpq-def}
\end{equation}
The sum in \eqref{eq:Fpq-def} is finite. Moreover, the square-root factors in $\nu_{i,p}^{(s)}\nu_{i,q}^{(s)}$ occur only through products with the same first index $i$, so $F_{pq}(s)$ is a rational function of $s$ on its domain of definition. By Theorem~\ref{thm:extended-scalar}, this rational function is zero for every $s\in\hR$. Since \(\hR\) is infinite and contains accumulation points away from the finite set of poles of \(F_{pq}\), the function \(F_{pq}\) must be the zero rational function. Hence the identity extends across removable singularities to positive integer values of $s$.

Now let $s$ be a positive integer and $1\le p,q\le s$.  Evaluating the continued identity at this $s$ and using Lemma~\ref{lem:nu-mu-connection} gives
\begin{equation}
    \gamma_{p-1,q-1}
    +
    \sum_{i=1}^{s}
    d_{i-1}\mu_{i-1,p-1}\mu_{i-1,q-1}
    =0.
\end{equation}
This is precisely the $(p,q)$ entry of $\Up+U^TDU=0$. Therefore $\Up=-U^TDU$. Since $D$ has strictly positive diagonal entries and $U$ is upper triangular with diagonal entries $\mu_{ii}=1$, $\Up$ is negative definite. This proves Theorem~\ref{thm:main-cholesky}.
\end{proof}

\subsection{Technical ingredients for the extended scalar identity}
\subsubsection{Telescoping identities for the four parity classes}
The proof requires four finite summation identities. They have the same structure: a hypergeometric term is written as a telescoping difference. We state them together to make the later proof concise.

For $n\in\Nzero$, define $H_n^{(r)}$, $\Psi_n^{(r)}$, and $\Phi_n^{(r)}$ as follows.

\paragraph{Type I.}
For $p,q\in\Npos$, set
\begin{equation}
H_n^{(1)}=
\frac{
\left(s+\frac32-p\right)_n(1-p)_n
\left(s+\frac12-q\right)_n(-q)_n
}{
(p-s+1)_n\left(p+\frac32\right)_n
(q-s+1)_n\left(q+\frac32\right)_n
}.
\label{eq:H1}
\end{equation}
Let
\begin{align}
\cC_{1,n}^{(1)}&=(4n+3)(1+s-2p)(q-n)(1+2s+2n-2q),
\label{eq:C11}\\
\cC_{2,n}^{(1)}&=(4n+1)(s-2q)(1+2p+2n)(s-p-n),
\label{eq:C21}\\
\cC_{3,n}^{(1)}&=(n+p-s)(1+2p+2n)(n+q-s)(1+2q+2n),
\label{eq:C31}
\end{align}
and
\begin{equation}
\resizebox{0.85\linewidth}{!}{$\displaystyle
\Psi_n^{(1)}=
H_n^{(1)}
\frac{\cC_{1,n}^{(1)}+\cC_{2,n}^{(1)}}{(s-p)(1+2p)(s-q)(1+2q)},
\qquad
\Phi_n^{(1)}=
H_n^{(1)}
\frac{\cC_{3,n}^{(1)}}{(s-p)(1+2p)(s-q)(1+2q)}.
$}
\label{eq:PsiPhi1}
\end{equation}

\paragraph{Type II.}
Set
\begin{equation}
H_n^{(2)}=
\frac{
\left(s-p+\frac12\right)_n(1-p)_n
\left(s-q+\frac12\right)_n(1-q)_n
}{
(p-s+1)_n\left(p+\frac12\right)_n
(q-s+2)_n\left(q+\frac32\right)_n
}.
\label{eq:H2}
\end{equation}
Let
\begin{align}
\cC_{1,n}^{(2)}&=(4n+3)(s-2q)(2p-1-2n-2s)(n+1-p),
\label{eq:C12}\\
\cC_{2,n}^{(2)}&=(4n+1)(s-2p+1)(s-q-n-1)(2q+2n+1),
\label{eq:C22}\\
\cC_{3,n}^{(2)}&=(p-s+n)(2p+2n-1)(q-s+n+1)(2q+2n+1),
\label{eq:C32}
\end{align}
and
\begin{equation}
\resizebox{0.85\linewidth}{!}{$\displaystyle
\Psi_n^{(2)}=
H_n^{(2)}
\frac{\cC_{1,n}^{(2)}+\cC_{2,n}^{(2)}}{(2q+1)(2p-1)(p-s)(q-s+1)},
\qquad
\Phi_n^{(2)}=
H_n^{(2)}
\frac{\cC_{3,n}^{(2)}}{(2q+1)(2p-1)(p-s)(q-s+1)}.
$}
\label{eq:PsiPhi2}
\end{equation}

\paragraph{Type III.}
Set
\begin{equation}
H_n^{(3)}=
\frac{
\left(s+\frac12-p\right)_n(-p)_n
\left(s+\frac12-q\right)_n(1-q)_n
}{
(p-s+1)_n\left(p+\frac32\right)_n
(q-s+2)_n\left(q+\frac32\right)_n
}.
\label{eq:H3}
\end{equation}
Let
\begin{align}
\cC_{1,n}^{(3)}&=(4n+3)(s-2q)(p-n)(2s-2p+2n+1),
\label{eq:C13}\\
\cC_{2,n}^{(3)}&=(4n+1)(s-2p)(s-q-n-1)(2q+2n+1),
\label{eq:C23}\\
\cC_{3,n}^{(3)}&=(p-s+n)(2p+2n+1)(q-s+n+1)(2q+2n+1),
\label{eq:C33}
\end{align}
and
\begin{equation}
\resizebox{0.85\linewidth}{!}{$\displaystyle
\Psi_n^{(3)}=
H_n^{(3)}
\frac{\cC_{1,n}^{(3)}+\cC_{2,n}^{(3)}}{(2p+1)(p-s)(q-s+1)(2q+1)},
\qquad
\Phi_n^{(3)}=
H_n^{(3)}
\frac{\cC_{3,n}^{(3)}}{(2p+1)(p-s)(q-s+1)(2q+1)}.
$}
\label{eq:PsiPhi3}
\end{equation}

\paragraph{Type IV.}
Set
\begin{equation}
H_n^{(4)}=
\frac{
\left(s-p+\frac12\right)_n(1-p)_n
\left(s+\frac32-q\right)_n(1-q)_n
}{
(p-s+1)_n\left(p+\frac12\right)_n
(q-s+1)_n\left(q+\frac32\right)_n
}.
\label{eq:H4}
\end{equation}
Let
\begin{align}
\cC_{1,n}^{(4)}&=(4n+3)(s-2q+1)(2s+2n-2p+1)(p-n-1),
\label{eq:C14}\\
\cC_{2,n}^{(4)}&=(4n+1)(s-2p+1)(s-q-n)(2q+2n+1),
\label{eq:C24}\\
\cC_{3,n}^{(4)}&=(p-s+n)(2p+2n-1)(q-s+n)(2q+2n+1),
\label{eq:C34}
\end{align}
and
\begin{equation}
\resizebox{0.85\linewidth}{!}{$\displaystyle
\Psi_n^{(4)}=
H_n^{(4)}
\frac{\cC_{1,n}^{(4)}+\cC_{2,n}^{(4)}}{(2p-1)(p-s)(q-s)(2q+1)},
\qquad
\Phi_n^{(4)}=
H_n^{(4)}
\frac{\cC_{3,n}^{(4)}}{(2p-1)(p-s)(q-s)(2q+1)}.
$}
\label{eq:PsiPhi4}
\end{equation}
When the parameter dependence must be displayed explicitly, we write
\(\Psi_n^{(\rho)}(s;p,q)\) and \(\Phi_n^{(\rho)}(s;p,q)\), \(\rho=1,2,3,4\), for the Type~\(\rho\) quantities obtained from the above formulas with the displayed parameters \(s,p,q\).

\begin{lemma}[Telescoping identities]
\label{lem:telescoping-certificates}
For each $r=1,2,3,4$, the following identity holds whenever the corresponding quantities are defined:
\begin{equation}
    \Psi_n^{(r)}=\Phi_n^{(r)}-\Phi_{n+1}^{(r)},
    \qquad n\ge0.
    \label{eq:telescoping-certificate}
\end{equation}
Moreover,
\begin{align*}
\Phi_0^{(1)}&=1, & \Phi_n^{(1)}&=0 &&\text{for }n\ge p,\\
\Phi_0^{(2)}&=1, & \Phi_n^{(2)}&=0 &&\text{for }n\ge p,\\
\Phi_0^{(3)}&=1, & \Phi_n^{(3)}&=0 &&\text{for }n\ge q,\\
\Phi_0^{(4)}&=1, & \Phi_n^{(4)}&=0 &&\text{for }n\ge q.
\end{align*}
Consequently,
\begin{equation}
    \sum_{n=0}^{\infty}\Psi_n^{(r)}=1,
    \qquad r=1,2,3,4,
    \label{eq:Psi-sum-one}
\end{equation}
where each infinite series is in fact finite.
\end{lemma}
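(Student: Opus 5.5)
The plan is to prove the telescoping relation \eqref{eq:telescoping-certificate} by reducing it to a polynomial identity in \(n\), and then read off the boundary values and the summation.

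\textbf{Step 1: reduce to a polynomial identity.} Fix \(r\). Both \(\Psi_n^{(r)}\) and \(\Phi_n^{(r)}\) are of the form \(H_n^{(r)}\) times a polynomial in \(n\), divided by an \(n\)-independent normalizing constant. So I would first compute the ratio \(H_{n+1}^{(r)}/H_n^{(r)}\) from the Pochhammer shift \((x)_{n+1}=(x)_n(x+n)\). For Type I this gives
\[
\frac{H_{n+1}^{(1)}}{H_n^{(1)}}=\frac{(s+\tfrac32-p+n)(1-p+n)(s+\tfrac12-q+n)(n-q)}{(p-s+1+n)(p+\tfrac32+n)(q-s+1+n)(q+\tfrac32+n)}.
\]
Dividing \eqref{eq:telescoping-certificate} by \(H_n^{(r)}\) and by the common constant, the claim becomes
\[
\cC_{1,n}^{(r)}+\cC_{2,n}^{(r)}=\cC_{3,n}^{(r)}-\frac{H_{n+1}^{(r)}}{H_n^{(r)}}\,\cC_{3,n+1}^{(r)}.
\]

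\textbf{Step 2: check the polynomial identity.} The certificates are designed so that \(\cC_{3,n+1}^{(r)}\) contains exactly the four linear factors in the denominator of the ratio. For Type I, \(\cC_{3,n+1}^{(1)}=(n+1+p-s)(2p+2n+3)(n+1+q-s)(2q+2n+3)\), which equals \(4\) times that denominator. The quotient term is therefore the polynomial \(4(s+\tfrac32-p+n)(1-p+n)(s+\tfrac12-q+n)(n-q)\). What remains is an identity between degree-four polynomials in \(n\) with coefficients polynomial in \(s,p,q\). I would verify it by expanding. A more economical route is to evaluate both sides at enough values of \(n\), or to compare coefficients of \(n^4,\dots,n^0\); the leading \(n^4\) terms cancel, as they must.

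The same check is repeated for Types II--IV, with the factor of \(4\) and the shifts adjusted. This bookkeeping is the main obstacle: it is the step where a sign or a half-integer shift error would show up. I would organize it by matching, in each type, the factors of \(\cC_3\) at \(n+1\) against the denominator Pochhammers of \(H\).

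\textbf{Step 3: boundary values and summation.} \(\Phi_0^{(r)}=1\) follows because \(H_0=1\) and \(\cC_{3,0}^{(r)}\) equals the normalizing denominator. For Type I, \((p-s)(1+2p)(q-s)(1+2q)\) equals \((s-p)(1+2p)(s-q)(1+2q)\). The other types are checked in the same way.

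For vanishing, note that \(H_n^{(1)}\) contains \((1-p)_n\), which is zero for \(n\ge p\) since the factor \(1-p+(p-1)\) appears. Likewise \((1-p)_n\) for Type II, and \((1-q)_n\) for Types III and IV, give \(\Phi_n=0\) for \(n\ge p\) or \(n\ge q\) respectively. The denominators stay nonzero for \(s\in\hR\), because the Pochhammers there have half-integer or \(s\)-shifted arguments.

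Finally, summing the telescoping relation from \(n=0\) to \(N-1\), with \(N\) beyond the vanishing index, gives \(\sum_n\Psi_n^{(r)}=\Phi_0^{(r)}-\Phi_N^{(r)}=1\). The \(\Psi_n^{(r)}\) also vanish there, since each is a multiple of \(H_n^{(r)}\). This proves \eqref{eq:Psi-sum-one}.
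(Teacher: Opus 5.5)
Your proposal is correct and follows essentially the same route as the paper: you compute $\rho_n^{(r)}=H_{n+1}^{(r)}/H_n^{(r)}$ from the Pochhammer recurrence and reduce the telescoping relation to the polynomial identity $\cC_{3,n+1}^{(r)}\rho_n^{(r)}-\cC_{3,n}^{(r)}=-\cC_{1,n}^{(r)}-\cC_{2,n}^{(r)}$; you then obtain $\Phi_0^{(r)}=1$ from $H_0^{(r)}=1$ and the termination from the factors $(1-p)_n$ or $(1-q)_n$. Your observation that $\cC_{3,n+1}^{(r)}$ is exactly four times the denominator of $\rho_n^{(r)}$ (it holds in all four types) explains why the check is genuinely polynomial, but phrase the reduction as multiplying the polynomial identity by $H_n^{(r)}$ rather than dividing by it, so that the indices with $H_n^{(r)}=0$ are also covered.
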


\begin{proof}
For each type, let \(\rho_n^{(r)}\) denote the explicit rational factor listed in Appendix~\ref{app:telescoping-verification}. Applying \((x)_{n+1}=(x)_n(x+n)\) to the Pochhammer factors in \(H_n^{(r)}\) gives
\begin{equation*}
    H_{n+1}^{(r)}=\rho_n^{(r)}H_n^{(r)}.
\end{equation*}
Let \(\Delta^{(r)}\) denote the common denominator in the definitions of \(\Psi_n^{(r)}\) and \(\Phi_n^{(r)}\). It therefore suffices to verify
\begin{equation}
    \cC_{3,n+1}^{(r)}\rho_n^{(r)}-\cC_{3,n}^{(r)}
    =-\cC_{1,n}^{(r)}-\cC_{2,n}^{(r)}.
    \label{eq:C-polynomial-identity}
\end{equation}
For \(r=1,2,3,4\), this is a finite polynomial identity after clearing the displayed denominators. The four rational factors \(\rho_n^{(r)}\) and the corresponding polynomial checks are given in Appendix~\ref{app:telescoping-verification}. Multiplying \eqref{eq:C-polynomial-identity} by \(H_n^{(r)}/\Delta^{(r)}\) and using the preceding identity gives \eqref{eq:telescoping-certificate}.

The initial values \(\Phi_0^{(r)}=1\) follow immediately from \(H_0^{(r)}=1\) and the definitions of \(\cC_{3,0}^{(r)}\). The termination follows from the factors \((1-p)_n\) in Types I and II, and from the factors \((1-q)_n\) in Types III and IV. Summing \eqref{eq:telescoping-certificate} over \(n\) gives \eqref{eq:Psi-sum-one}.
\end{proof}

\subsubsection{Product identities for adjacent Cholesky entries}
The next lemma connects the quantities $\nu_{ij}^{(s)}$ with the terminating sums above.

\begin{lemma}[Product reductions]
For all positive integers $i,p,q$ and $s\in\hR$, the following identities hold:
\begin{align}
\nu_{2i-1,2p-1}^{(s)}\nu_{2i-1,2q+1}^{(s)}
+\nu_{2i,2p}^{(s)}\nu_{2i,2q}^{(s)}
&=
\frac{(s-q)(2s+1-2p)}{s^2}
\Psi_{i-1}^{(1)}(s;p,q)
\eta_{2p-1}^{(s)}\eta_{2q}^{(s)},
\label{eq:prod1}\\
\nu_{2i,2p-1}^{(s)}\nu_{2i,2q+1}^{(s)}
+\nu_{2i-1,2p}^{(s)}\nu_{2i-1,2q}^{(s)}
&=
\frac{(2p-1)q}{s^2}
\Psi_{i-1}^{(2)}(s;p,q)
\eta_{2p-1}^{(s)}\eta_{2q}^{(s)},
\label{eq:prod2}\\
\nu_{2i,2q+1}^{(s)}\nu_{2i,2p}^{(s)}
+\nu_{2i-1,2p+1}^{(s)}\nu_{2i-1,2q}^{(s)}
&=
\frac{2q(p-s)}{s^2}
\Psi_{i-1}^{(3)}(s;p,q)
\eta_{2p}^{(s)}\eta_{2q}^{(s)},
\label{eq:prod3}\\
\nu_{2i-1,2p}^{(s)}\nu_{2i-1,2q-1}^{(s)}
+\nu_{2i,2p-1}^{(s)}\nu_{2i,2q}^{(s)}
&=
\frac{(1-2p)(2s-2q+1)}{2s^2}
\Psi_{i-1}^{(4)}(s;p,q)
\eta_{2p-1}^{(s)}\eta_{2q-1}^{(s)}.
\label{eq:prod4}
\end{align}
\end{lemma}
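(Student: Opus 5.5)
Each of the four identities \eqref{eq:prod1}--\eqref{eq:prod4} will be proved by a direct term-by-term computation. Both sides are products of explicit Pochhammer expressions times $\eta$-coefficients, so the plan is to write each product on the left as a common prefactor times a rational function of $n=i-1$. That prefactor should be $H_{n}^{(r)}\eta\eta/s^2$ up to a constant, and the remaining rational part should then match $\cC_{1,n}^{(r)}+\cC_{2,n}^{(r)}$ divided by the denominator $\Delta^{(r)}$. Throughout, $s\in\hR$ is kept generic so that no Pochhammer denominator in \eqref{eq:nu-oe}--\eqref{eq:nu-oo} vanishes, and all manipulations are identities of rational functions.

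First, I will read off the required $\nu$'s from the four parity formulas. For \eqref{eq:prod1}, the term $\nu_{2i-1,2p-1}$ uses \eqref{eq:nu-oo} with $j=p$, and $\nu_{2i-1,2q+1}$ uses \eqref{eq:nu-oo} with $j=q+1$. The term $\nu_{2i,2p}$ uses \eqref{eq:nu-ee} with $j=p$, and $\nu_{2i,2q}$ uses \eqref{eq:nu-ee} with $j=q$. The other three identities are analogous: in each, one product carries the factor $4(4i-3)/s^2$ and the other $4(4i-1)/s^2$, and these become the factors $(4n+1)$ and $(4n+3)$ appearing in $\cC_1^{(r)},\cC_2^{(r)}$.

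Second, the $\eta$'s must be aligned. Each product contains, for example, $\eta_{2p-1}\eta_{2q+1}$ or $\eta_{2p}\eta_{2q}$, while the right side wants a fixed pair such as $\eta_{2p-1}\eta_{2q}$. For this I will use the ratio from \eqref{eq:eta-rational},
\[
\eta_{k+1}^{(s)}/\eta_k^{(s)}=\frac{s-k}{(k+1)(2s-k-1)},
\]
and convert every $\eta$ into the target pair at the cost of linear rational factors in $s,p,q$.

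Third, each Pochhammer of index $i$ or $i-1$ will be split via $(x)_{n+1}=(x)_n(x+n)$ into $(x)_{n}$ times a linear factor. Shifted arguments such as $(q+\frac32)_n$ versus $(q+\frac12)_n$ will be matched using $(x+1)_n=(x)_n(x+n)/x$. After this, each of the two products equals $H_n^{(r)}$ times a rational function in which the Pochhammers have disappeared. It then remains to check that these two rational functions, summed, equal the prescribed prefactor, for instance $(s-q)(2s+1-2p)/s^2$, times $(\cC_{1,n}+\cC_{2,n})/\Delta$. One term should contribute $\cC_{1,n}$ and the other $\cC_{2,n}$, so I will verify each term separately as a polynomial identity in $n,s,p,q$ after clearing denominators.

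The main obstacle is the bookkeeping in these last two steps. The $\eta$-index shifts, the half-integer Pochhammer shifts and the sign conventions (for example $(-j)_i$ versus $(1-j)_i$, and the minus signs in \eqref{eq:nu-eo}--\eqref{eq:nu-ee}) must all cancel precisely, and this is where a mismatch between the conventions of the $\nu$ formulas and of $H_n^{(r)}$ is most likely. I would do Type I fully by hand and then, for Types II--IV, record the analogous linear factors in a table, as in Appendix~\ref{app:telescoping-verification}, checking the final polynomial identities symbolically.
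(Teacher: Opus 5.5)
Your proposal is correct and follows the paper's own proof. The paper likewise substitutes \eqref{eq:nu-oe}--\eqref{eq:nu-oo} directly and shows that each of the two products equals the common prefactor times $H_{i-1}^{(r)}\cC_{1,i-1}^{(r)}/\Delta^{(r)}$ or $H_{i-1}^{(r)}\cC_{2,i-1}^{(r)}/\Delta^{(r)}$; in Type~I, for instance, the row-$(2i-1)$ product gives $\cC_{2}^{(1)}$ and the row-$2i$ product gives $\cC_{1}^{(1)}$. It then adds the two, writing out Type~I in full and recording the common factors for Types~II--IV in Appendix~\ref{app:product-reductions}, and the $\eta$-ratio and Pochhammer-shift bookkeeping you flag closes up exactly as you describe.
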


\begin{proof}
We prove \eqref{eq:prod1} in detail.  The remaining three reductions are obtained by the same direct substitution of \eqref{eq:nu-oe}--\eqref{eq:nu-oo}; their common factors are listed in Appendix~\ref{app:product-reductions}.

For \eqref{eq:prod1}, using \eqref{eq:nu-oo} gives
\begin{equation*}
\nu_{2i-1,2p-1}^{(s)}\nu_{2i-1,2q+1}^{(s)}
=
\frac{(s-q)(2s+1-2p)}{s^2}
\eta_{2p-1}^{(s)}\eta_{2q}^{(s)}
H_{i-1}^{(1)}
\frac{\cC_{2,i-1}^{(1)}}{(s-p)(1+2p)(s-q)(1+2q)} .
\end{equation*}
Similarly, using \eqref{eq:nu-ee} gives
\begin{equation*}
\nu_{2i,2p}^{(s)}\nu_{2i,2q}^{(s)}
=
\frac{(s-q)(2s+1-2p)}{s^2}
\eta_{2p-1}^{(s)}\eta_{2q}^{(s)}
H_{i-1}^{(1)}
\frac{\cC_{1,i-1}^{(1)}}{(s-p)(1+2p)(s-q)(1+2q)} .
\end{equation*}
Adding these two identities and using the definition of \(\Psi_{i-1}^{(1)}\) in \eqref{eq:PsiPhi1} proves \eqref{eq:prod1}.  The appendix gives the analogous common-factor forms for \eqref{eq:prod2}--\eqref{eq:prod4}, which prove the remaining identities in the same way.
\end{proof}

\subsubsection{Adjacent-index summation formula}
\begin{proposition}[Adjacent summation identity]\label{prop:adjacent}
For all $p,q\in\Npos$ and $s\in\hR$,
\begin{equation}
\sum_{i=1}^{\infty}\nu_{i,p}^{(s)}\nu_{i,q+1}^{(s)}
+
\sum_{i=1}^{\infty}\nu_{i,p+1}^{(s)}\nu_{i,q}^{(s)}
=
\alpha_{pq}^{(s)}.
\label{eq:adjacent-identity}
\end{equation}
\end{proposition}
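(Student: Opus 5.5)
The plan is to split the proof of \eqref{eq:adjacent-identity} according to the parities of \(p\) and \(q\). In each parity class, the two sums on the left are regrouped so that, for each fixed \(i\), two adjacent terms combine into exactly one of the product reductions \eqref{eq:prod1}--\eqref{eq:prod4}. The telescoping identities of Lemma~\ref{lem:telescoping-certificates} then evaluate the resulting sum over \(i\).

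\textbf{Regrouping the sums.} Each sum \(\sum_i\) runs over \(i=2m-1\) and \(i=2m\) with \(m\ge1\). I pair the odd-\(i\) term of one sum with the even-\(i\) term of the other.

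For example, take \(p=2P-1\) (odd) and \(q=2Q\) (even). Then \(p+1=2P\) and \(q+1=2Q+1\). The left side of \eqref{eq:adjacent-identity} is
\[
\sum_{m\ge1}\Bigl(\nu_{2m-1,2P-1}\nu_{2m-1,2Q+1}+\nu_{2m,2P}\nu_{2m,2Q}\Bigr)
+\sum_{m\ge1}\Bigl(\nu_{2m,2P-1}\nu_{2m,2Q+1}+\nu_{2m-1,2P}\nu_{2m-1,2Q}\Bigr).
\]
The first bracket is \eqref{eq:prod1} and the second is \eqref{eq:prod2}, both with \((p,q)\) replaced by \((P,Q)\) and \(i\) by \(m\). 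By \eqref{eq:Psi-sum-one}, summing over \(m\) gives
\[
\frac{(s-Q)(2s+1-2P)+(2P-1)Q}{s^2}\,\eta_{2P-1}^{(s)}\eta_{2Q}^{(s)}.
\]

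\textbf{Matching \(\alpha_{pq}^{(s)}\).} Since \(p+q\) is odd here, \eqref{eq:alpha-rational} gives
\[
\alpha_{pq}^{(s)}=\frac{(s-p)(s-q)+s^2}{s^2}\,\eta_p\eta_q .
\]
With \(p=2P-1\) and \(q=2Q\),
\[
(s-2P+1)(s-2Q)+s^2=(s-Q)(2s+1-2P)+(2P-1)Q,
\]
which one checks by expanding both sides. This settles the odd/even case.

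\textbf{Remaining parity cases.}
\begin{itemize}
\item The even/odd case \(p=2P\), \(q=2Q-1\) follows from the odd/even case by the symmetry of the left side under \(p\leftrightarrow q\), together with \(\alpha_{pq}=\alpha_{qp}\).
\item In the even/even case \(p=2P\), \(q=2Q\), I pair the terms so that both \eqref{eq:prod3} as written and \eqref{eq:prod3} with \(p\) and \(q\) interchanged appear. The result is
\[
\frac{2Q(P-s)+2P(Q-s)}{s^2}\,\eta_{2P}\eta_{2Q},
\]
which should equal \(\bigl((s-2P)(s-2Q)-s^2\bigr)/s^2\) times \(\eta\eta\). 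Expanding, both numerators equal \(4PQ-2s(P+Q)\).
\item The odd/odd case uses \eqref{eq:prod4} in the same symmetric way. The target is
\[
\frac{(1-2P)(2s-2Q+1)+(1-2Q)(2s-2P+1)}{2s^2}
\quad\text{against}\quad
\frac{(s-2P+1)(s-2Q+1)-s^2}{s^2},
\]
and expanding again gives agreement.
\end{itemize}

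\textbf{Expected obstacle.} The main difficulty is bookkeeping, not the algebra. For each parity case I must choose the pairing so that the index conventions of \eqref{eq:prod1}--\eqref{eq:prod4} match exactly, including the shifted indices \(2q+1\) and \(2p+1\) and which variable carries the shift. I must also check that no boundary terms are lost: the vanishing \(\nu_{ij}^{(s)}=0\) for \(i>j\) makes every sum finite, and the telescoping sums start at \(\Psi_0\), which corresponds to \(m=1\). Finally, the square-root factors cancel because each product has a common first index, so everything remains a rational identity valid for \(s\in\hR\).
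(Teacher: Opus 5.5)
Your proposal is correct and follows the paper's proof essentially step for step. Like the paper, you split by parity and pair odd-$i$ with even-$i$ terms. In the odd/even case this produces \eqref{eq:prod1} and \eqref{eq:prod2}; in the even/even case, \eqref{eq:prod3} together with its $p\leftrightarrow q$ interchange; and in the odd/odd case, \eqref{eq:prod4} together with its interchange. You then sum via \eqref{eq:Psi-sum-one}, obtain the even/odd case by symmetry, and finish with the same polynomial checks against $\alpha_{pq}^{(s)}$.
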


\begin{proof}
All sums are finite. We split the proof according to the parities of $p$ and $q$.

First suppose $p=2a-1$ and $q=2b$. By \eqref{eq:prod1}, \eqref{eq:prod2}, and \eqref{eq:Psi-sum-one},
\begin{align*}
&\sum_{i=1}^{\infty}\nu_{i,2a-1}^{(s)}\nu_{i,2b+1}^{(s)}
+
\sum_{i=1}^{\infty}\nu_{i,2a}^{(s)}\nu_{i,2b}^{(s)} \\
&\quad=
\left(
\frac{(s-b)(2s+1-2a)}{s^2}
+
\frac{(2a-1)b}{s^2}
\right)
\eta_{2a-1}^{(s)}\eta_{2b}^{(s)} \\
&\quad=
\left(
\frac{(s-2a+1)(s-2b)}{s^2}+1
\right)
\eta_{2a-1}^{(s)}\eta_{2b}^{(s)} \\
&\quad=
\theta_{2a-1}^{(s)}\theta_{2b}^{(s)}+
\eta_{2a-1}^{(s)}\eta_{2b}^{(s)}
=\alpha_{2a-1,2b}^{(s)}.
\end{align*}
The case where $p$ is even and $q$ is odd follows by symmetry.

Next suppose $p=2a$ and $q=2b$. Applying \eqref{eq:prod3} and also \eqref{eq:prod3} with $a$ and $b$ interchanged gives
\begin{align*}
&\sum_{i=1}^{\infty}\nu_{i,2a}^{(s)}\nu_{i,2b+1}^{(s)}
+
\sum_{i=1}^{\infty}\nu_{i,2a+1}^{(s)}\nu_{i,2b}^{(s)} \\
&\quad=
\left(
\frac{2b(a-s)}{s^2}+\frac{2a(b-s)}{s^2}
\right)
\eta_{2a}^{(s)}\eta_{2b}^{(s)} \\
&\quad=
\left(
\frac{(s-2a)(s-2b)}{s^2}-1
\right)
\eta_{2a}^{(s)}\eta_{2b}^{(s)} \\
&\quad=
\theta_{2a}^{(s)}\theta_{2b}^{(s)}-
\eta_{2a}^{(s)}\eta_{2b}^{(s)}
=\alpha_{2a,2b}^{(s)}.
\end{align*}
Finally suppose $p=2a-1$ and $q=2b-1$. By \eqref{eq:prod4} and the same identity with $a$ and $b$ interchanged,
\begin{align*}
&\sum_{i=1}^{\infty}\nu_{i,2a-1}^{(s)}\nu_{i,2b}^{(s)}
+
\sum_{i=1}^{\infty}\nu_{i,2a}^{(s)}\nu_{i,2b-1}^{(s)} \\
&\quad=
\left(
\frac{(1-2a)(2s-2b+1)}{2s^2}
+
\frac{(1-2b)(2s-2a+1)}{2s^2}
\right)
\eta_{2a-1}^{(s)}\eta_{2b-1}^{(s)} \\
&\quad=
\left(
\frac{(s-2a+1)(s-2b+1)}{s^2}-1
\right)
\eta_{2a-1}^{(s)}\eta_{2b-1}^{(s)} \\
&\quad=
\theta_{2a-1}^{(s)}\theta_{2b-1}^{(s)}-
\eta_{2a-1}^{(s)}\eta_{2b-1}^{(s)}
=\alpha_{2a-1,2b-1}^{(s)}.
\end{align*}
This covers all parity cases and proves the proposition.
\end{proof}

\subsection{Proof of Theorem~\ref{thm:extended-scalar}}
\begin{proof}
Since both sides of \eqref{eq:scalar-identity-goal} are symmetric in $p$ and $q$, assume $1\le p\le q$.

We first treat the boundary case $p=1$. Because $\nu_{i,1}^{(s)}=0$ for $i>1$ and $\nu_{1,1}^{(s)}=1$, we have
\begin{equation}
    \sum_{i=1}^{\infty}\nu_{i,1}^{(s)}\nu_{i,q}^{(s)}=\nu_{1,q}^{(s)}.
\end{equation}
If $q=2b$, then \eqref{eq:nu-oe} gives
\begin{equation}
    \nu_{1,2b}^{(s)}=-\frac{2b}{s}\eta_{2b}^{(s)}
    =\left(\frac{s-2b}{s}-1\right)\eta_{2b}^{(s)}
    =\alpha_{0,2b}^{(s)}.
\end{equation}
If $q=2b-1$, then \eqref{eq:nu-oo} gives
\begin{equation}
    \nu_{1,2b-1}^{(s)}=\frac{2s-2b+1}{s}\eta_{2b-1}^{(s)}
    =\left(\frac{s-(2b-1)}{s}+1\right)\eta_{2b-1}^{(s)}
    =\alpha_{0,2b-1}^{(s)}.
\end{equation}
Since $\gamma_{0,q-1}^{(s)}=-\alpha_{0,q}^{(s)}$, the identity \eqref{eq:scalar-identity-goal} follows for $p=1$.

Now let $p>1$. We use the alternating decomposition
\begin{equation}
\begin{aligned}
\sum_{i=1}^{\infty}\nu_{i,p}^{(s)}\nu_{i,q}^{(s)}
&=
\sum_{k=1}^{p-1}(-1)^{k-1}
\left(
\sum_{i=1}^{\infty}\nu_{i,p-k+1}^{(s)}\nu_{i,q+k-1}^{(s)}
\right.\\
&\qquad\qquad\qquad\left.
+
\sum_{i=1}^{\infty}\nu_{i,p-k}^{(s)}\nu_{i,q+k}^{(s)}
\right)
+
(-1)^{p-1}
\sum_{i=1}^{\infty}\nu_{i,1}^{(s)}\nu_{i,p+q-1}^{(s)}.
\end{aligned}
\label{eq:alternating-decomposition}
\end{equation}
This identity is obtained by expanding the right-hand side; all intermediate sums cancel in pairs, leaving precisely the term on the left-hand side.
By Proposition \ref{prop:adjacent},
\begin{equation}
\sum_{i=1}^{\infty}\nu_{i,p-k+1}^{(s)}\nu_{i,q+k-1}^{(s)}
+
\sum_{i=1}^{\infty}\nu_{i,p-k}^{(s)}\nu_{i,q+k}^{(s)}
=
\alpha_{p-k,q+k-1}^{(s)}.
\end{equation}
The boundary case already proved gives
\begin{equation}
\sum_{i=1}^{\infty}\nu_{i,1}^{(s)}\nu_{i,p+q-1}^{(s)}
=
\alpha_{0,p+q-1}^{(s)}.
\end{equation}
Substituting these two identities into \eqref{eq:alternating-decomposition} yields
\begin{equation}
\begin{aligned}
\sum_{i=1}^{\infty}\nu_{i,p}^{(s)}\nu_{i,q}^{(s)}
&=
\sum_{k=1}^{p}(-1)^{k-1}
\alpha_{p-k,q+k-1}^{(s)} \\
&=
\sum_{j=0}^{p-1}(-1)^{p-j-1}
\alpha_{j,p+q-1-j}^{(s)} \\
&=-
\sum_{j=0}^{p-1}(-1)^{p-j}
\alpha_{j,p+q-1-j}^{(s)}.
\end{aligned}
\end{equation}
Because $p\le q$, the last sum is exactly $-\gamma_{p-1,q-1}^{(s)}$ by \eqref{eq:gamma-rational}. This proves \eqref{eq:scalar-identity-goal} and hence Theorem~\ref{thm:extended-scalar}.
\end{proof}

\section{Numerical experiments}
\label{sec:numerical-experiments}
This section presents three numerical tests, adapted from the test problems in~\cite{SunWeiWu2022EnergyLaws}, to illustrate the first-subdiagonal energy law.  The purpose is twofold: to observe the expected order \(2s-1\) of the \((s-1,s)\) Pad\'e approximants and to verify the stepwise dissipation identity predicted by Theorem~\ref{thm:energy-law}.  All computations are performed in double precision.  In each test we compare the numerical one-step energy dissipation
\begin{equation}
    \mathcal D_{\rm num}^n:=\|u^n\|^2-\|u^{n+1}\|^2
    \label{eq:numerical-dissipation}
\end{equation}
with the theoretical dissipation predicted by the energy law \eqref{eq:main-energy-law}, namely
\begin{equation}
\begin{aligned}
    \mathcal D_{\rm law}^n
    &:={}
    \left(\frac{(s-1)!}{(2s-1)!}\right)^2
    \tau^{2s}\|L^s w^n\|^2
    +
    \sum_{k=0}^{s-1} d_k\tau^{2k+1}\sn{L^k u^{(k)}}^2 .
\end{aligned}
    \label{eq:law-dissipation}
\end{equation}
The identity \eqref{eq:main-energy-law} predicts \(\mathcal D_{\rm num}^n=\mathcal D_{\rm law}^n\), up to round-off error.  In exact arithmetic, it also gives \(\mathcal D_{\rm num}^n\ge0\).  In the plots below, the dissipation generated by the step from \(u^n\) to \(u^{n+1}\) is displayed at the right endpoint \(t_{n+1}\).  All finite-dimensional computations use the Euclidean norm. In Examples~\ref{ex:numerical-p1-convection} and~\ref{ex:numerical-p0-dispersion}, under the adopted modal normalization, its square differs from the standard discrete \(L^2\)-norm squared only by the fixed factor \(\Delta x\). This scaling does not affect contractivity or the comparison between \(\mathcal D_{\rm num}^n\) and \(\mathcal D_{\rm law}^n\).

\begin{example}
\label{ex:numerical-3by3}
We first consider the linear seminegative system from~\cite{SunShu2017RK4}
\begin{equation}
    \frac{\dd}{\dd t}u=Lu,\qquad
    u(t)\in \mathbb R^3,\qquad
    L=-\begin{pmatrix}
    1&2&2\\
    0&1&2\\
    0&0&1
    \end{pmatrix}.
    \label{eq:example-3by3-system}
\end{equation}
The initial condition is chosen as
\begin{equation}
    u(0)=(0.9134,\ 0.2785,\ 0.5469)^T,
\end{equation}
and the system is solved up to \(T=8\).  We test the \((s-1,s)\) first-subdiagonal Pad\'e approximants with \(s=3\) and \(s=4\), using
\begin{equation}
    \tau\in\{1.6,\ 0.8,\ 0.4,\ 0.2\}.
\end{equation}
The exact reference solution is \(\exp(TL)u(0)\).  Since the \((s-1,s)\) Pad\'e approximant has order \(2s-1\), the global solution error is expected to converge with order \(2s-1\) in this test.  Following~\cite{SunWeiWu2022EnergyLaws}, we define the total energy-dissipation accuracy as
\begin{equation}
\begin{aligned}
    \Delta E
    &:={}
    \left|
    \bigl(\|u(0)\|^2-\|u(T)\|^2\bigr)
    -
    \bigl(\|u^0\|^2-\|u^N\|^2\bigr)
    \right|  \\
    &=
    \left|\|u(T)\|^2-\|u^N\|^2\right|,
    \qquad N=T/\tau .
\end{aligned}
    \label{eq:total-energy-dissipation-error}
\end{equation}
For the energy-dissipation curve in Figure~\ref{fig:subdiag-energy-dissipation}(a), we use the \((2,3)\) Pad\'e approximant and \(\tau=0.2\).
\end{example}

\begin{table}[htbp]
\caption{The \(\ell^2\)-errors and the energy-dissipation accuracy \(\Delta E\) at \(T=8\), together with the corresponding convergence rates.}
\label{tab:subdiag-3by3-errors}
\centering
\small
\setlength{\tabcolsep}{3.5pt}
\renewcommand{\arraystretch}{1.15}
\begin{tabular}{c|cccc|cccc}
\hline
 & \multicolumn{4}{c|}{\((2,3)\) Pad\'e, \(s=3\)}
 & \multicolumn{4}{c}{\((3,4)\) Pad\'e, \(s=4\)}\\
\(\tau\) & \(\ell^2\) error & order & \(\Delta E\) & order
& \(\ell^2\) error & order & \(\Delta E\) & order\\
\hline
1.6 & \(1.55\times10^{-5}\) & --   & \(5.03\times10^{-7}\)  & --   & \(3.51\times10^{-7}\)  & --   & \(1.36\times10^{-8}\)  & --\\
0.8 & \(5.22\times10^{-7}\) & 4.89 & \(1.70\times10^{-8}\)  & 4.89 & \(2.91\times10^{-9}\)  & 6.91 & \(1.13\times10^{-10}\) & 6.91\\
0.4 & \(1.74\times10^{-8}\) & 4.90 & \(5.72\times10^{-10}\) & 4.89 & \(2.36\times10^{-11}\) & 6.95 & \(9.13\times10^{-13}\) & 6.95\\
0.2 & \(5.68\times10^{-10}\)& 4.94 & \(1.88\times10^{-11}\) & 4.93 & \(1.87\times10^{-13}\) & 6.97 & \(7.27\times10^{-15}\) & 6.97\\
\hline
\end{tabular}
\end{table}

\begin{example}
\label{ex:numerical-p1-convection}
The second test uses the seminegative ODE system arising from the piecewise linear discontinuous Galerkin semidiscretization of
\begin{equation}
    \psi_t+\psi_x=0
\end{equation}
on \([0,1]\) with periodic boundary conditions.  Let \(N_d=20\) and \(\Delta x=1/N_d\).  The semidiscrete system has the form
\begin{equation}
    \frac{\dd}{\dd t}u=Lu,
    \qquad
    u(t)\in\mathbb R^{2N_d},
    \qquad
    L=\frac1{\Delta x}
    \begin{pmatrix}
        L_1 & \sqrt3 L_1\\
        \sqrt3(2I_{N_d}-L_2) & -3L_2
    \end{pmatrix},
    \label{eq:p1-convection-system}
\end{equation}
where
\begin{equation}
L_1=\begin{pmatrix}
-1&&&1\\
1&-1&&\\
&\ddots&\ddots&\\
&&1&-1
\end{pmatrix},\qquad
L_2=\begin{pmatrix}
1&&&1\\
1&1&&\\
&\ddots&\ddots&\\
&&1&1
\end{pmatrix}.
\label{eq:L1-L2-def}
\end{equation}
The initial coefficient vector is the cellwise \(L^2\)-projection of \(\psi(x,0)=\sin(2\pi x)\) onto the modal basis \(\{1,\sqrt{3}\xi\}\), where \(\xi\in[-1,1]\) is the reference-cell coordinate.  We solve up to \(T=4\) by the \((1,2)\) first-subdiagonal Pad\'e approximant, with \(\tau=0.1\). This relatively large time step is used to illustrate the unconditional contractivity established above. The quantities \(\mathcal D_{\rm num}^n\) and \(\mathcal D_{\rm law}^n\) are plotted in Figure~\ref{fig:subdiag-energy-dissipation}(b) to verify the discrete energy identity \eqref{eq:main-energy-law}.
\end{example}

\begin{example}
\label{ex:numerical-p0-dispersion}
The third test follows the local discontinuous Galerkin semidiscretization of the dispersive equation
\begin{equation}
    \psi_t+\psi_{xxx}=0
\end{equation}
on \([0,1]\) with periodic boundary conditions.  With the same \(N_d=20\), \(\Delta x=1/N_d\), and the matrix \(L_1\) in \eqref{eq:L1-L2-def}, the semidiscrete system is
\begin{equation}
    \frac{\dd}{\dd t}u=Lu,
    \qquad
    u(t)\in\mathbb R^{N_d},
    \qquad
    L=\frac1{\Delta x^3}L_1L_1^T L_1^T.
    \label{eq:p0-dispersion-system}
\end{equation}
The initial coefficient vector is the cellwise \(L^2\)-projection of \(\psi(x,0)=\cos(2\pi x)\) onto piecewise constants, namely, the vector of cell averages.  We solve up to \(T=4\) using the \((3,4)\) first-subdiagonal Pad\'e approximant with \(\tau=0.1\). For this stiff semidiscrete operator, the computed dissipation quickly reaches the round-off level. Once this floating-point plateau is reached, direct subtraction of nearly equal energies is dominated by round-off error. In Figure~\ref{fig:subdiag-energy-dissipation}(c), we display only those time steps for which both dissipation values are positive and the relative difference between \(\mathcal D_{\rm num}^n\) and \(\mathcal D_{\rm law}^n\) is at most five percent.
\end{example}

\begin{figure}[H]
\centering
\includegraphics[width=.95\linewidth]{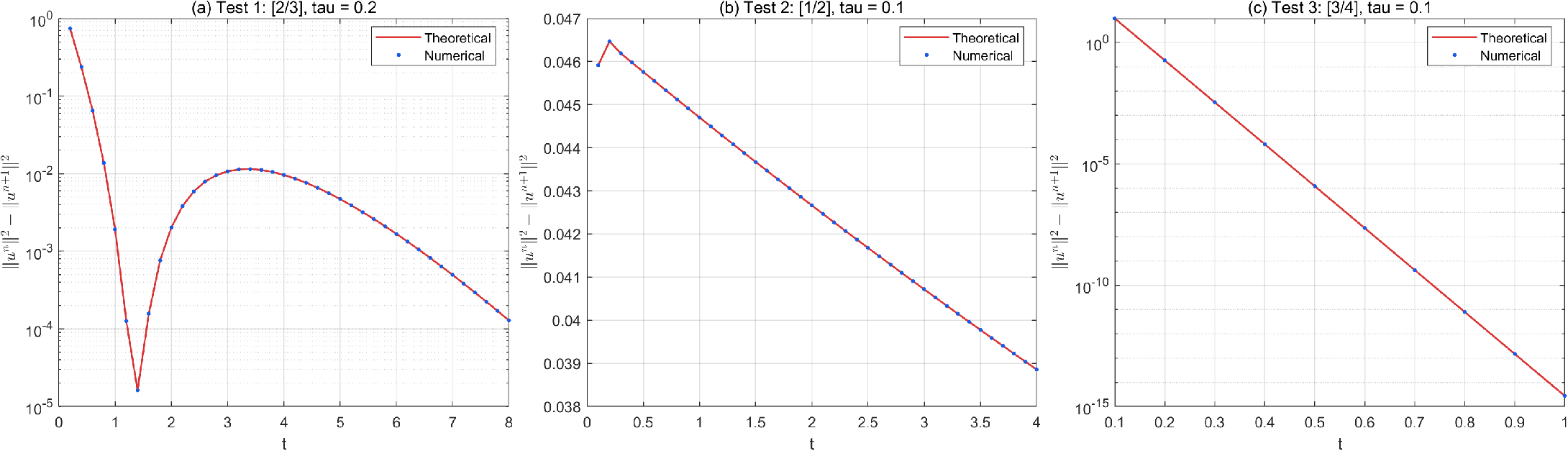}
\caption{Numerical energy dissipation magnitudes and the theoretical values given by the first-subdiagonal energy identity \eqref{eq:main-energy-law}. Panels (a), (b), and (c) correspond to Examples~\ref{ex:numerical-3by3}, \ref{ex:numerical-p1-convection}, and \ref{ex:numerical-p0-dispersion}, respectively.}
\label{fig:subdiag-energy-dissipation}
\end{figure}

\section{Conclusions}
\label{sec:conclusions}
We have proved an explicit discrete energy law for the first-subdiagonal Pad\'e approximants applied to linear seminegative problems with bounded operators.  The result extends the diagonal Pad\'e energy-law theory to the first-subdiagonal family and relies on an explicit Cholesky-type factorization of the corresponding energy coefficient matrix.  The main algebraic difficulty is that the matrix entries are alternating sums of Pad\'e coefficients, whereas the proposed triangular factor has parity-dependent factorial entries; this difficulty is resolved by reducing the matrix identity to scalar rational identities and proving them through product reductions and adjacent-index summations.  Together with the \(\beta\)-coefficient cancellation and the Pad\'e denominator root-location result, the factorization yields an exact energy law that recovers the classical unconditional contractivity for the class of problems considered here.  The numerical experiments illustrate the predicted order and verify the discrete dissipation identity.  For future work, it would be interesting to investigate whether a comparable explicit energy-law structure can be established for the second-subdiagonal Pad\'e family.

\clearpage
\appendix
\section{Algebraic verifications for Section~\ref{sec:first-subdiagonal-proof}}
\label{app:algebraic-verifications}

\subsection{Verification of Lemma~\ref{lem:nu-mu-connection}}
\label{app:nu-mu-verification}
This subsection gives the algebraic verification of the four parity identities used in Lemma~\ref{lem:nu-mu-connection}.  Factorials with noninteger arguments are interpreted through the Gamma function away from its poles, and the resulting rational identities are extended in \(s\) by the identity theorem for rational functions.  If the row index exceeds the column index, both sides vanish: the matrix \(U\) is upper triangular, and the factors \((-j)_i\) or \((1-j)_i\) in \eqref{eq:nu-oe}--\eqref{eq:nu-oo} vanish.  Hence we only consider the upper-triangular range.

We write out the two off-diagonal parity cases, which contain the only nontrivial factorial reductions.  The two same-parity cases are then obtained from the same-parity formula \eqref{eq:mu-even} by the same simplification.

\paragraph{The odd--even entry.}
Substituting the odd-parity formula \eqref{eq:mu-odd} with row index \(2i-2\) and column index \(2j-1\) gives
\begin{equation}
\sqrt{d_{2i-2}}\,\mu_{2i-2,2j-1}
=-\frac{2s!\sqrt{4i-3}}{(2s)!}
\frac{(2s+2i-2j-3)!(s-i-j)!(i+j-2)!}
{(s-2j)!(s-j+i-2)!(2i+2j-3)!(j-i)!}.
\label{eq:app-oe-start-short}
\end{equation}
The two factorial blocks are reduced by Lemma~\ref{lem:pochhammer-basic} as
\begin{align}
\frac{(2s+2i-2j-3)!(s-i-j)!}{(s-j+i-2)!}
&=-(2s-2j-1)!\,2^{2i-2}
\frac{\left(s-j+\frac12\right)_{i-1}}{(j-s+1)_{i-1}},
\label{eq:app-oe-block1}\\*
\frac{(i+j-2)!}{(2i+2j-3)!(j-i)!}
&=
\frac{(-j)_i}{(2j)!\,2^{2i-2}\left(j+\frac12\right)_{i-1}}.
\label{eq:app-oe-block2}
\end{align}
Using
\begin{equation}
\eta_{2j}^{(s)}=\frac{s!}{(2s-1)!}
\frac{(2s-2j-1)!}{(2j)!(s-2j)!},
\end{equation}
we obtain
\begin{equation}
\sqrt{d_{2i-2}}\,\mu_{2i-2,2j-1}
=
\frac{2\sqrt{4i-3}}{s}
\frac{\left(s-j+\frac12\right)_{i-1}(-j)_i}
{(j-s+1)_{i-1}\left(j+\frac12\right)_{i-1}}
\eta_{2j}^{(s)},
\end{equation}
which is \eqref{eq:nu-oe}.

\paragraph{The even--odd entry.}
Substituting \eqref{eq:mu-odd} with row index \(2i-1\) and column index \(2j-2\) gives
\begin{equation}
\sqrt{d_{2i-1}}\,\mu_{2i-1,2j-2}
=-\frac{2s!\sqrt{4i-1}}{(2s)!}
\frac{(2s+2i-2j-1)!(s-i-j)!(i+j-2)!}
{(s-2j+1)!(s-j+i-1)!(2i+2j-3)!(j-i-1)!}.
\label{eq:app-eo-start-short}
\end{equation}
The corresponding block reductions are
\begin{align}
\frac{(2s+2i-2j-1)!(s-i-j)!}{(s-j+i-1)!}
&=(2s-2j)!\,2^{2i-1}
\frac{\left(s-j+\frac12\right)_i}{(j-s)_i},
\label{eq:app-eo-block1}\\
\frac{(i+j-2)!}{(2i+2j-3)!(j-i-1)!}
&=
\frac{(1-j)_i}{(2j-1)!\,2^{2i-1}\left(j+\frac12\right)_{i-1}}.
\label{eq:app-eo-block2}
\end{align}
Together with
\begin{equation}
\eta_{2j-1}^{(s)}=\frac{s!}{(2s-1)!}
\frac{(2s-2j)!}{(2j-1)!(s-2j+1)!},
\end{equation}
these reductions give
\begin{equation}
\sqrt{d_{2i-1}}\,\mu_{2i-1,2j-2}
=
-\frac{2\sqrt{4i-1}}{s}
\frac{\left(s-j+\frac12\right)_i(1-j)_i}
{(j-s)_i\left(j+\frac12\right)_{i-1}}
\eta_{2j-1}^{(s)},
\end{equation}
which is \eqref{eq:nu-eo}.

\paragraph{The same-parity entries.}
Applying the same-parity formula \eqref{eq:mu-even} and using the same two block reductions gives, for the even--even entry,
\begin{equation}
\sqrt{d_{2i-1}}\,\mu_{2i-1,2j-1}
=2\sqrt{4i-1}
\frac{\left(s+\frac12-j\right)_i(-j)_i}
{(j-s)_i\left(j+\frac12\right)_i}
\frac{s-j}{s}\eta_{2j}^{(s)}.
\end{equation}
Since \((j-s)_i=-(s-j)(j-s+1)_{i-1}\), this is exactly \eqref{eq:nu-ee}.  Similarly, the odd--odd entry gives
\begin{equation}
\sqrt{d_{2i-2}}\,\mu_{2i-2,2j-2}
=
\frac{2\sqrt{4i-3}}{s}
\frac{\left(s+\frac12-j\right)_i(1-j)_{i-1}}
{(j-s)_{i-1}\left(j+\frac12\right)_{i-1}}
\eta_{2j-1}^{(s)},
\end{equation}
which is \eqref{eq:nu-oo}.  The four parity cases prove Lemma~\ref{lem:nu-mu-connection}.

\subsection{Polynomial checks for Lemma~\ref{lem:telescoping-certificates}}
\label{app:telescoping-verification}
For each type, let \(\rho_n^{(r)}\) be the explicit rational factor displayed below. The Pochhammer recurrence \((x)_{n+1}=(x)_n(x+n)\) gives
\begin{equation*}
    H_{n+1}^{(r)}=\rho_n^{(r)}H_n^{(r)}.
\end{equation*}
Consequently, the polynomial identity to be checked is
\begin{equation}
    \cC_{3,n+1}^{(r)}\rho_n^{(r)}-\cC_{3,n}^{(r)}
    +\cC_{1,n}^{(r)}+\cC_{2,n}^{(r)}=0.
    \label{eq:appendix-polynomial-identity}
\end{equation}
The four rational factors are
\begin{align*}
\rho_n^{(1)}&=
\frac{
\left(s+\frac32-p+n\right)(1-p+n)
\left(s+\frac12-q+n\right)(-q+n)
}{
(p-s+1+n)\left(p+\frac32+n\right)
(q-s+1+n)\left(q+\frac32+n\right)},\\
\rho_n^{(2)}&=
\frac{
\left(s-p+\frac12+n\right)(1-p+n)
\left(s-q+\frac12+n\right)(1-q+n)
}{
(p-s+1+n)\left(p+\frac12+n\right)
(q-s+2+n)\left(q+\frac32+n\right)},\\
\rho_n^{(3)}&=
\frac{
\left(s+\frac12-p+n\right)(-p+n)
\left(s+\frac12-q+n\right)(1-q+n)
}{
(p-s+1+n)\left(p+\frac32+n\right)
(q-s+2+n)\left(q+\frac32+n\right)},\\
\rho_n^{(4)}&=
\frac{
\left(s-p+\frac12+n\right)(1-p+n)
\left(s+\frac32-q+n\right)(1-q+n)
}{
(p-s+1+n)\left(p+\frac12+n\right)
(q-s+1+n)\left(q+\frac32+n\right)}.
\end{align*}
Substitution of each displayed factor and the corresponding definitions of \(\cC_{1,n}^{(r)},\cC_{2,n}^{(r)},\cC_{3,n}^{(r)}\) into \eqref{eq:appendix-polynomial-identity} reduces, after clearing the displayed denominators, to the zero polynomial in \(n,p,q,s\). This is a finite algebraic verification; no limiting or summation argument is involved in this step.

\subsection{Remaining product reductions}
\label{app:product-reductions}
For \eqref{eq:prod2}, direct substitution gives
\begin{align*}
&\nu_{2i,2p-1}^{(s)}\nu_{2i,2q+1}^{(s)}
=
A_2 H_{i-1}^{(2)}
\frac{\cC_{1,i-1}^{(2)}}{\Delta_2},\\
&\nu_{2i-1,2p}^{(s)}\nu_{2i-1,2q}^{(s)}
=
A_2 H_{i-1}^{(2)}
\frac{\cC_{2,i-1}^{(2)}}{\Delta_2},
\end{align*}
where
\[
A_2=
\frac{(2p-1)q}{s^2}
\eta_{2p-1}^{(s)}\eta_{2q}^{(s)},
\qquad
\Delta_2=(2q+1)(2p-1)(p-s)(q-s+1).
\]
Adding the two lines and using \eqref{eq:PsiPhi2} gives \eqref{eq:prod2}.

For \eqref{eq:prod3}, direct substitution gives
\begin{align*}
&\nu_{2i,2q+1}^{(s)}\nu_{2i,2p}^{(s)}
=
A_3 H_{i-1}^{(3)}
\frac{\cC_{1,i-1}^{(3)}}{\Delta_3},\\
&\nu_{2i-1,2p+1}^{(s)}\nu_{2i-1,2q}^{(s)}
=
A_3 H_{i-1}^{(3)}
\frac{\cC_{2,i-1}^{(3)}}{\Delta_3},
\end{align*}
where
\[
A_3=
\frac{2q(p-s)}{s^2}
\eta_{2p}^{(s)}\eta_{2q}^{(s)},
\qquad
\Delta_3=(2p+1)(p-s)(q-s+1)(2q+1).
\]
Adding the two lines and using \eqref{eq:PsiPhi3} gives \eqref{eq:prod3}.

For \eqref{eq:prod4}, direct substitution gives
\begin{align*}
&\nu_{2i-1,2p}^{(s)}\nu_{2i-1,2q-1}^{(s)}
=
A_4 H_{i-1}^{(4)}
\frac{\cC_{2,i-1}^{(4)}}{\Delta_4},\\
&\nu_{2i,2p-1}^{(s)}\nu_{2i,2q}^{(s)}
=
A_4 H_{i-1}^{(4)}
\frac{\cC_{1,i-1}^{(4)}}{\Delta_4},
\end{align*}
where
\[
A_4=
\frac{(1-2p)(2s-2q+1)}{2s^2}
\eta_{2p-1}^{(s)}\eta_{2q-1}^{(s)},
\qquad
\Delta_4=(2p-1)(p-s)(q-s)(2q+1).
\]
Adding the two lines and using \eqref{eq:PsiPhi4} gives \eqref{eq:prod4}. This completes the verification of the remaining product reductions.

\section*{Acknowledgments}
The work of Miaosen Jiao and Kailiang Wu was partially supported by Science Challenge Project (No.~TZ2025007) and the Shenzhen Science and Technology Program (Grant Nos.~JCYJ20250604144300001 and RCJC20221008092757098).

\section*{Conflicts of Interest}
The authors declare no conflict of interest.

\end{document}